\numberwithin{equation}{section}
\newtheorem{theorem}{Theorem}[section]
\newtheorem{lemma}[theorem]{Lemma}
\newtheorem{proposition}[theorem]{Proposition}
\theoremstyle{definition}
\newtheorem{remark}[theorem]{Remark}
\newtheorem*{xrem}{Theorem}
\numberwithin{equation}{section}
\begin{document}


\baselineskip=17pt


\title{on the Rankin-Selberg problem in families}

\author{Jiseong Kim}
\address{University at Buffalo, Department of Mathematics
244 Mathematics Building
Buffalo, NY 14260-2900}
\email{Jiseongk@buffalo.edu}

\date{}

\begin{abstract} 
In this paper, we investigate the Rankin-Selberg problem over short intervals in families of holomorphic modular forms and Hecke-Maass cusp forms. Our investigation assumes a Lindel\"of-on-average bound for holomorphic modular forms, and for Hecke-Maass cusp forms, we make no assumptions.
\end{abstract}
\maketitle

\smallskip
\noindent \textbf{Keywords.} Automorphic form; Hecke eigenvalue; Rankin-Selberg problem; Short intervals.  
\section{Introduction}\

Let $\mathbb{H}=\{z=x+iy | x \in \mathbb{R}, y \in (0,\infty)\},$ and let  $G=SL(2,\mathbb{Z}).$ 
Define $j_{\gamma}(z)=(cz+d)^{-1},$ where 
$\gamma= \left(\begin{matrix}
a & b \\
c & d 
\end{matrix} \right) \in G.$
A holomorphic function $f: \mathbb{H} \rightarrow \mathbb{C}$ satisfying the equation
\begin{equation}f(\gamma z)= j_{\gamma}(z)^{-k}f(z) \;\;\; \textrm{for all }\;\; \gamma \in SL(2,\mathbb{Z}),\nonumber\end{equation}
is called a modular form of weight $k.$ It is well known that any such modular form $f(z)$ has a Fourier expansion at the cusp $\infty$ given by 
\begin{equation}
    f(z)=\sum_{n=0}^{\infty} b_{f}(n)e(nz)
\end{equation}
where $e(z)=e^{2\pi iz},$ and the normalized Fourier coefficient $a_{f}(n)$ of $f(z)$ is defined as
\begin{equation}
a_{f}(n):=b_{f}(n)n^{-(k-1)/2}.
\end{equation}
The set of all modular forms of a fixed weight $k$ forms a vector space, and we denote it by $M_{k}.$  
Furthermore, we focus on a subspace of this space, denoted by $C_{k}$, which comprises of all modular forms of a weight $k$ that have a zero constant term. Let us consider the $n$th Hecke operator, denoted by $T_{n}$, which is defined as 

$$ (T_{n}f)(z)= \frac{1}{\sqrt{n}} \sum_{ad=n} \sum_{b(\rm{mod} \thinspace \it{d})} f(\frac{az+b}{d})$$
for all $f \in C_{k}.$

It is well established that there exists an orthonormal basis of eigenfunctions, called Hecke cusp forms, for all Hecke operators $T_{n}$ in the space of modular forms $C_{k}$. In this paper, we will use $S_{k}$ to denote this orthonormal basis of $C_{k}$.

When $f$ is a Hecke cusp form, eigenvalues $\lambda_{f}(n)$ of the $n$th Hecke operator are real numbers that satisfy the following properties:
$$a_{f}(n)=a_{f}(1)\lambda_{f}(n),\;\;\textrm{ and } \;\;\lambda_{f}(m)\lambda_{f}(n)= \sum_{d|(m,n)} \lambda_{f}\left(\frac{mn}{d^{2}}\right).$$ 

For details, see \cite[Chapter 14]{IK1}. 
A function $u:G \setminus \mathbb{H} \rightarrow \mathbb{C}$ is called a Maass form of type $v \in \mathbb{C}$ if $u$ satisfies the following properties: 
\begin{itemize}
\item $ \displaystyle \iint_{G \textbackslash \mathbb{H}} |u(z)|^{2} \frac{dx dy}{y^{2}} <\infty,$
\item $u(\gamma z)= u(z)$ for all $\gamma \in G,$
\item $\Delta u=v(1-v)u,$ where $\displaystyle \Delta =-y^{2}\left( \frac{\partial^{2}}{\partial x^{2}} + \frac{\partial^{2}}{\partial y^{2}} \right),$
\item $ \displaystyle \int_{0}^{1} u(z) dx=0.$
\end{itemize}
When $u$ is a non-constant Maass form of type $v,$ we can express it using the Whittaker expansion given by
$$ u(z)= \sqrt{2\pi y} \sum_{n \neq 0} a_{u}(n)K_{v-\frac{1}{2}}(2\pi |n| y) e(nx).$$ For further details, see \cite[Chapter 3]{DG}.
We are interested in a particular type of Maass forms called Hecke-Maass forms, which are eigenfunctions of the Hecke operators $T_n$ for all $n$.

There have been several results on the asymptotic evaluations of sums related to Hecke eigenvalues. One of the most notable results is the Rankin-Selberg formula that was first established by Rankin \cite{RA1} and Selberg \cite{S}. The formula states that for a Hecke cusp form $f$,
\begin{equation}\label{Rankin-Selberg1}
    \sum_{1 \leq m \leq X}\lambda_{f}(m)^{2}=\frac{ \textrm{Res}_{s=1} L(\textrm{sym}^{2}f,s)}{\zeta(2)}X+O_{f}(X^{3/5})
\end{equation} 
where $L\left(\textrm{sym}^{2}f,s\right)$ is the symmetric square $L$-function of $f,$ $\zeta(s)$ is the Riemann zeta function (for the details, see \cite[5.12]{IK1}). The Rankin-Selberg
problem is to improve the exponent 3/5.
Recently, Huang \cite{huang2021rankinselberg} improved the exponent $3/5$ to $3/5-1/560+o(1)$. It is worth noting that these results apply not only to the holomorphic  $SL(2,\mathbb{Z})$ Hecke cusp forms but also to the  $SL(2,\mathbb{Z})$ Hecke-Maass cusp forms.
For convenience, we denote the coefficient of $X$ by $c_{1,f}.$ 
Ivic \cite{Ivicrankin} proved that for $1 \leq H \leq X,$ the following holds: 
$$
 \int_X^{2 X}\left( \sum_{x < m \leq x+H}\lambda_f(m)^2-c_{1,f}H \right)^2 d x \ll_{f,k} X^{\left(9+12\alpha\right)/\left(7+4\alpha\right)}H^{8/\left(7+4\alpha\right)}
$$
where $\alpha=\limsup_{t\rightarrow \infty} \frac{\log \left|\zeta(1/2+it)\right|}{\log t}.$ Note that under the Lindel\"{o}f hypothesis, we have $\alpha=0.$
In \cite{Ivicrankin}, Ivic also proved that, assuming the generalized Lindel\"{o}f hypothesis $\displaystyle L(\textrm{sym}^{2}f,\frac{1}{2}+it) \ll_{f,\epsilon} k^{\epsilon}(|t|+1)^{\epsilon}$, the following holds:
$$
 \int_X^{2 X}\left( \sum_{x < m \leq x+H}\lambda_f(m)^2-c_{1,f}H \right)^2 d x \ll_{f,k,\epsilon} X^{1+\epsilon}H^{4/3} \;\;\;\;\; (X^{\epsilon} < H < X^{1-\epsilon}).
$$
Recently, Mangerel \cite{Mangerel} proved the following theorem:
\begin{xrem} Let $10 \leq H_0 \leq X /(10 \log X)$ and set $H:=H_0 \log X$. Then there exist a constant $\theta>0$ such that
$$
\frac{1}{X} \int_X^{2 X}\left( \sum_{x<m \leq x+H}\lambda_f(m)^2-c_{1,f} H\right)^2 d x \ll H^{2}\left( \frac{\log \log H_0}{\log H_0}+\frac{\log \log X}{(\log X)^\theta}\right) .
$$
\end{xrem}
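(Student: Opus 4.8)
The plan is to view $g(m):=\lambda_f(m)^2$ as a non-negative, divisor-bounded multiplicative function and to run the Matom\"aki--Radziwi\l\l{} method on it. I would first record the analytic input. By the Hecke relations $g$ is multiplicative with $g(p)=\lambda_f(p)^2=\lambda_f(p^2)+1$, it satisfies $g(m)\le d(m)^2$ and $g(p^k)\le(k+1)^2$, and its Dirichlet series is
\[
F(s):=\sum_{m\ge 1}\frac{\lambda_f(m)^2}{m^s}=\frac{\zeta(s)\,L(\mathrm{sym}^2 f,s)}{\zeta(2s)}.
\]
Since $L(\mathrm{sym}^2 f,s)$ is entire and non-vanishing on $\mathrm{Re}(s)=1$, the only singularity of $F$ on the $1$-line is the simple pole at $s=1$ with residue $c_{1,f}$, and the prime number theorem for $\mathrm{sym}^2 f$ gives $\sum_{p\le y}\lambda_f(p)^2\log p\sim y$, equivalently $\sum_{p\le y}\lambda_f(p)^2/p=\log\log y+O(1)$. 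Thus $g(p)$ averages to $1$: $g$ has ``sieve dimension one'', which is exactly the regime in which the short-interval machinery produces a main term of size $c_{1,f}H$ together with a power-type saving over the trivial variance bound $H^2$.

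Second, I would carry out the Parseval reduction. Passing from the additive window $[x,x+H]$ to the multiplicative window $[x,x(1+H/x)]$ (harmless since $x\asymp X$), a Plancherel-type identity of Saffari--Vaughan type expresses the left-hand side, after subtracting the pole contribution $c_{1,f}H$, as a weighted integral of $|F(1+it)|^2$ over $|t|\ll X/H$. Equivalently, the task becomes bounding mean values of the Dirichlet polynomials $\sum_{m\sim X}g(m)m^{-1-it}$ across this frequency range, and I would control these by combining two complementary estimates for $F(1+it)$.

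Third, the core estimate. Following Matom\"aki--Radziwi\l\l{}, I would factor a prime out of $m$, writing $m=p\ell$ with $p$ in a dyadic block $[P,2P]$, and exploit that almost every integer has a prime factor in a prescribed range together with a mean-value theorem for the resulting product polynomials. The usable blocks $[P,2P]$ range over $\log X\lesssim P\lesssim H$, a range of logarithmic length $\asymp\log H_0$ (primes below $\sim\log X$ are not usable, which is the source of the normalization $H=H_0\log X$), and optimizing the weighted sum over these $\asymp\log H_0$ scales yields the first error term $H^2(\log\log H_0)/\log H_0$. The frequencies not reached by this prime-factoring argument I would dispose of by a Hal\'asz-type large-values bound for $|F(1+it)|$, whose saving is a power of $\log X$, producing the second error term $H^2(\log\log X)/(\log X)^{\theta}$.

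The main obstacle is that $g$ is not bounded by $1$ but only divisor-bounded, so the off-the-shelf Matom\"aki--Radziwi\l\l{} and Hal\'asz inequalities do not apply directly; one must install their divisor-bounded analogues, controlling higher moments of Dirichlet polynomials whose coefficients $g(p^k)$ can be large, and verifying that the prime-power contributions and the integers with abnormally many prime factors are negligible. A secondary difficulty is the bookkeeping needed to split the frequency range so that the two claimed error shapes, in $H_0$ and in $X$ respectively, emerge cleanly; here the precise thresholds $P\asymp\log X$ and $P\asymp H$ must be justified so that the logarithmic length of the prime range is exactly $\asymp\log H_0$.
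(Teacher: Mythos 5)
This statement is not proved in the paper at all: it is Mangerel's theorem, quoted verbatim as background (the unnumbered \verb|xrem| environment) and attributed to \cite{Mangerel}, so there is no in-paper proof to compare against. Measured instead against the cited source, your outline identifies the correct strategy: Mangerel's result is indeed obtained by extending the Matom\"aki--Radziwi\l\l{} method to divisor-bounded multiplicative functions, with the factorization $\sum_m \lambda_f(m)^2 m^{-s}=\zeta(s)L(\mathrm{sym}^2 f,s)/\zeta(2s)$, the Parseval/Saffari--Vaughan reduction to mean values of $F(1+it)$ over $|t|\ll X/H$, the extraction of a prime from a range $\log X\lesssim P\lesssim H$ (whose logarithmic length $\asymp\log H_0$ is the source of the $\log\log H_0/\log H_0$ term), and a Hal\'asz-type large-values estimate supplying the $(\log X)^{-\theta}$ term. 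That said, what you have written is a plan, not a proof: the points you yourself flag as ``main obstacles'' --- the divisor-bounded analogues of the Matom\"aki--Radziwi\l\l{} mean-value and Hal\'asz inequalities, the control of higher moments of Dirichlet polynomials with coefficients of size $d(m)^2$, the negligibility of integers lacking prime factors in the usable range, and the precise frequency splitting that separates the two error shapes --- are exactly where all the work lies, and none of it is carried out. So the proposal is a faithful roadmap to the cited proof rather than a proof; note also that the present paper deliberately avoids this machinery, instead using the convolution identity $\lambda_f(m)^2=\sum_{d\mid m}\lambda_f(d^2)$ together with averaging over the family via the Petersson/Kuznetsov trace formulas, which is a genuinely different (and conditional, or family-averaged) route to a stronger error term.
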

\noindent In order to prove Theorem \ref{theorem}, we apply the Petersson trace formula (see Lemma \ref{trace}) and assume the following Lindel\"of-on-average bound for holomorphic cusp forms:
\begin{equation}\label{lindelof}
\sum_{f \in S_{k}} \varphi_{f} | L(\textrm{sym}^{2}f,1/2+it)|^{2}\ll_{\epsilon}  k^{\epsilon}(|t|+1)^{\epsilon} \;\;\textrm{for} \;\;t \in \mathbb{R}, \; \textrm{for all} \; \epsilon \in (0,1), \end{equation}
where $$\varphi_{f}:=\left(\|f\|^{2}\sum_{g \in S_{k}} \frac{1}{\|g\|^{2}}\right)^{-1}.$$
\begin{theorem}\label{theorem} Assume the Lindel\"of-on-average bound \eqref{lindelof}.
 Let $\epsilon$ be a sufficiently small number in (0,1). Let $1\ll \log H < \log X < (\log k)/4.$ Then
\begin{equation}\label{thm}\begin{split}
\sum_{f \in S_{k}} \varphi_{f} \frac{1}{X} \int_X^{2 X}\left(\sum_{x<m \leq x+H} \lambda_{f}^2(m)-c_{1,f}H\right)^2 d x \ll_{\epsilon}  k^{\epsilon}H^{1+\epsilon}.
\end{split}\end{equation}
\end{theorem}
\noindent Note that $$\sum_{f \in S_{k}} \frac{1}{\|f\|^{2}} \sim \frac{(4\pi)^{k-1}}{\Gamma(k-1)} \;\textrm{as} \; k \rightarrow \infty, \;\; \frac{1}{\|f\|^{2}}\ll_{\epsilon} k^{-1+\epsilon}. $$

Let $u_{j}$ be an $SL(2,\mathbb{Z})$ Hecke-Maass cusp form with Laplacian eigenvalue  $\frac{1}{4}+t_{j}^{2},$ and let $\lambda_{u_{j}}(n)$ be the $n$th normalized Fourier coefficient of $u_{j}.$ The function $\lambda_{u_{j}}(n)$ also satisfies 
\begin{equation}\label{heckemaass}\lambda_{u_{j}}(m)\lambda_{u_{j}}(n)=\sum_{d|(m,n)} \lambda_{u_{j}}\left(\frac{mn}{d^{2}}\right). \end{equation}
As mentioned before, we have 
\begin{equation}\label{Rankin-Selberg}
    \sum_{1 \leq m \leq X}\lambda_{u_{j}}(m)^{2}=\frac{ \textrm{Res}_{s=1} L(\textrm{sym}^{2}u_{j},s)}{\zeta(2)}X+O_{u_{j}}(X^{3/5-1/560+o(1)}).
\end{equation}
For convenience, we denote the coefficient of $X$ by $c_{1,u_{j}}.$ 
 Our methods for Theorem 1.1 apply equally well to $SL(2,\mathbb{Z})$ Hecke-Maass cusp forms $u_{j}.$ For the Hecke-Maass cusp forms $u_{j},$ one needs to apply the Kuznetsov trace formula (see Lemma \ref{Kuznetsov}) instead of the Petersson trace formula. According to the result of Khan and Young \cite[Theorem 1.1, Theorem 1.3]{Pyoung}, for any $T>1$ and $0< \epsilon <2,$  we have 
\begin{equation}\label{khanyoung}
    \sum_{T \leq t_{j} \leq 2T} \left|L(\textrm{sym}^{2}  u_{j},1/2+iU)\right|^{2} \ll_{\epsilon}  T^{2+\epsilon} \;\;\textrm{for}\;\; U \in [0,(2-\epsilon)T].
\end{equation} Therefore, we unconditionally prove the following theorem.
\begin{theorem}\label{Theorem1.2} Let $\epsilon$ be a sufficiently small number in (0,1).
Let $1 \ll \log H < \log X,$ and let  $ 2\log X -(\log H)/2 < \log T.$ Then 
\begin{equation}\label{intge2}\sum_{T \leq t_{j} \leq 2T} \varpi_{j}
\frac{1}{X} \int_X^{2 X}\left(\sum_{x<m \leq x+H} \lambda_{u_{j}}^2(m)-c_{1,u_{j}}H\right)^2 d x\ll_{\epsilon} T^{\epsilon}H^{1+\epsilon}\end{equation}
where 
\begin{equation}w(j)=\frac{\zeta(2)}{L(\textrm{sym}^{2} u_{j},1)}, \;\;\; \varpi_{j}= w(j) \left(\sum_{T \leq t_{j} \leq 2T} w(j)\right)^{-1}.\nonumber \end{equation}\end{theorem}
\noindent Note that 
$$T^{2-\epsilon} \ll_{\epsilon} \sum_{T \leq t_{j} \leq 2T} w(j) \ll T^{2},\;\; w(j)\ll_{\epsilon} |t_{j}+1|^{\epsilon}$$ 
(see \cite{Hoffstein}). 
\begin{remark}
    As in \cite{GKMR}, one could consider problems related to the variance over arithmetic progressions. However, since we are considering families, these problems are relatively straightforward to handle without resorting to the methods described in \cite{GKMR}. For readers who are interested, we provide a simple example in the Appendix to illustrate this point.
\end{remark}
\subsection{Sketch of the proof}
\noindent Because the proofs of Theorem 1.1 and Theorem 1.2 share many similarities, we will provide a sketch of the proof of Theorem 1.1.
In \cite{GKMR}, Gorodetsky, Mat\"omaki, Radziwi\l\l \thinspace  and
Rodgers used the identity $\mu^2(m)=\sum_{d^2 \mid m} \mu(d)$ to evaluate the variance of the number of square-free integers in short intervals.
The squares of Hecke eigenvalues also exhibit a similar convolution structure, as shown by the following equation:
\begin{equation}\label{Hecke}\lambda_{f}\left(m\right)^{2}=\sum_{d \mid m} \lambda_{f}\left(d^{2}\right).\end{equation}
It should be noted that while the identity for square-free integers involves $d^2$, the convolution structure \eqref{Hecke} considers $d$. As a result, when dealing with square-free integers, we are essentially working with a sparse set of squares. However, when working with Hecke eigenvalues, we are considering a set of natural numbers that is not sparse. This phenomenon is explained well in the paper \cite{GMR} by Gorodetsky, Mangerel, and Rogers. 
 The authors of \cite{GKMR} split the sum over $d^{2}$ into $d^{2} \leq z$ and $d^{2}>z$ for some $z.$ We split the sum over $d$ into  $d \leq H$ and $d> H.$ 
By \eqref{Hecke}, we have 
$$\sum_{x<n\leq x+H} \lambda_{f}(n)^{2} 
= \sum_{1 \leq d \leq x+H}\lambda_{f}\left(d^{2}\right)\sum_{\frac{x}{d} < n \leq \frac{x+H}{d}}1. $$
When $d$ is small, it is expected that $$\sum_{x < n \leq x+H} \lambda_{f}(n)^{2} \sim H\sum_{d \leq x+H}\frac{\lambda_{f}\left(d^{2}\right)}{d}.$$ This can be demonstrated by using the Fourier expansion of
$$x \mapsto \sum_{\frac{x}{d} < n \leq \frac{x+H}{d}}1$$
(see Proposition \ref{Proposition1}). 
However, when $d$ is large (i.e. $d>H$), we have 
$$ \sum_{\frac{x}{d} < n \leq \frac{x+H}{d}} 1 = \delta_{[\frac{x}{d}]+1=[\frac{x+H}{d}]}$$ where
\begin{equation}
\delta_{{[\frac{x}{d}]+1=[\frac{x+H}{d}]}}:= \left. \begin{cases} 1 & \; \textrm{when} \;\; [\frac{x}{d}]+1=[\frac{x+H}{d}] \;\;  \\ 0 & \; \textrm{otherwise}  \end{cases} \right\}.
 \nonumber\end{equation}
As a result, we encounter a double sum
\begin{equation}
 \sum_{H< d_{1} \leq x+H}\sum_{H< d_{2} \leq x+H} \lambda_{f}\left(d_{1}^{2}\right)\lambda_{f}\left(d_{2}^{2}\right)
 \delta_{[\frac{x}{d_{1}}]+1=[\frac{x+H}{d_{1}}]} \delta_{[\frac{x}{d_{2}}]+1=[\frac{x+H}{d_{2}}]},
\nonumber\end{equation} which is difficult to control. To avoid this, we take a different approach by considering the average over all modular forms $f$ of weight $k$ and applying the Petersson trace formula (see Lemma \ref{trace}).
For the remaining parts, we utilize the Lindel\"of-on-average bound to obtain an approximation of the form
$$c_{1,f} \sim \sum_{m \leq  H^{2}} \frac{\lambda_{f}\left(m^{2}\right)}{m}$$ with a small error term (see Proposition \ref{RankinProposition35}).
\begin{remark}
Our approach clearly applies to demonstrating analogous results for a linear space $S_k(N)$ of cusp forms with weight $k$ and level $N.$
 Some of the results in our paper still hold if we replace $\lambda_{f}$ and $S_{k}$ with the coefficients of the standard $L$-function attached to a $GL(n)$ automorphic representation and a suitable family of $GL(n)$ automorphic representations, respectively.
 For instance, suppose we let $A_{g}(n,1,1,\dots,1)$ denote the $n$th coefficient of an $L$-function $L(g,s)$ associated with a $SL(n,\mathbb{Z})$ Hecke-Maass cusp form $g$. We can address the corresponding problem for $g$ by considering the expression:
$$\sum_{x < nd < x+H} A_{g}(d,1,\dots,1)- L(g,1)H.$$
Thus, using orthogonality (for instance, see \cite{Goldstadwood}), we can easily modify our method to establish that the averages of the following expressions over suitable families are bounded by the same upper bound as that of for the $SL(2,\mathbb{Z})$ Hecke-Maass (or holomorphic) forms:
\begin{equation}
\begin{split}
&\int_X^{2 X}\left(\sum_{\substack{x<nd\leq x+H \\ d\leq z}} A_{g}\left(d,1,\dots,1\right)-H \sum_{d\leq z} \frac{A_{g}\left(d,1,\dots,1\right)}{d}\right)^2 d x,\\
&\int_X^{2 X}\left(\sum_{\substack{x<nd\leq x+H \\ z<d}} A_{g}\left(d,1,\dots,1\right)\right)^2 d x
\end{split}
\end{equation}
Assuming the Lindel\"of-on-average bound for $L(g,s)$, we can  easily modify our method to show that
$$H L(g,1)-H \sum_{d \leq z} \frac{A_{g}(d,1,\dots,1)}{d}$$ is sufficiently small. 
\end{remark}
\begin{remark} It is conjectured (see \cite{Ivic22}) that 
$$ \sum_{1 \leq m \leq X} \lambda_{f}^{2}(m)-c_{1,f}X= O_{f,k}\left(X^{\frac{3}{8}+o(1)}\right).$$
 By Theorem \ref{theorem}, when $H=X^{3/8+\epsilon}$, we have that for almost all $f \in S_{k}$ and $x \in [X,2X],$ $$\left(\sum_{x<m \leq x+H} \lambda_f^2(m)-c_{1, f} H\right)^2 \ll_{\epsilon} k^{\epsilon}H^{1+\epsilon}\;\;\; \textrm{for sufficiently large} \;k.$$
 This leads to the conclusion that
$$\sum_{x<m \leq x+X^{3/8+\epsilon}} \lambda_f^2(m)-c_{1, f} X^{3/8+\epsilon} \ll_{\epsilon} k^{\epsilon}X^{3/16}$$
for almost all $f \in S_{k}$ and $x \in [X,2X].$ 
It is worth noting that the Lindel\"of hypothesis implies that when $H=X^{1/2+\epsilon}, $
$$\sum_{x<m \leq x+X^{1/2+\epsilon}} \lambda_f^2(m)-c_{1, f} X^{1/2+\epsilon} =o(X^{1/2+\epsilon}).$$   
\end{remark}

\section{Lemma}
\noindent
For convenience, we need to smooth the integrals in equations \eqref{thm} and \eqref{intge2}. We use the Beurling-Selberg polynomial from \cite{GKMR}. To simplify notation, we denote the number of the divisors of $m$ as $\tau(m)$. Additionally, for any condition $\mathcal{A}$, we define 
\begin{equation}
\delta_{\mathcal{A}}:= \left. \begin{cases} 1 & \; \textrm{when} \;\; \mathcal{A} \;\; \textrm{is satisfied} \\ 0 & \; \textrm{otherwise}  \end{cases} \right\}.
 \nonumber\end{equation} 
 We use the common notation that the least common multiple of $a$ and $b$, and the greatest common divisor of $a$ and $b$ are denoted as $[a,b]$ and $(a,b),$ respectively. Finally, we use the convention that $\epsilon$ denotes an arbitrarily small positive quantity that may vary from line to line.
\begin{lemma}\label{smoothing}
There exist functions $\sigma_{-}$ and $\sigma_{+}$ that map from $\mathbb{R}$ to $\mathbb{R}$ and satisfy the following properties: 
\begin{itemize}
\item The Fourier transforms $\hat{\sigma}_{-}(x):=\int_{\mathbb{R}}\sigma_{-}(y)e(-xy)dy$ and  $\hat{\sigma}_{+}(x):=\int_{\mathbb{R}}\sigma_{+}(y)e(-xy)dy$ have support 
$[-BH^{\frac{\epsilon}{2}},BH^{\frac{\epsilon}{2}}]$ for a sufficiently large absolute constant $B$.
\item $\sigma_{-} \leq  1_{[1,2]} \leq \sigma_{+},$ where 
$1_{[1.2]}$ is the indicator function over the interval $[1,2].$ 
\item   $\displaystyle   1- H^{-\frac{\epsilon}{2}} \leq  \int_{\mathbb{R}} \sigma_{-}(x) dx,  \int_{\mathbb{R}} \sigma_{+}(x) dx \leq 1+ H^{-\frac{\epsilon}{2}}.$
\end{itemize}
\end{lemma} 
\begin{proof}
See \cite[Chapter 1, page 6]{Montgomery2}.
\end{proof}
\noindent 
Since $\lambda_{f}(n)$ is divisor-bounded, we can apply the following lemma.
\begin{lemma}\label{shiu}
Let $\delta\in(0,1)$ be a fixed constant and suppose $X\geq Y\geq X^\delta\geq 2$. Then, for any positive integer $l$, we have the inequality
\begin{equation}\label{average2}\frac{1}{Y} \sum_{X<n \leq X+Y} \tau\left(n\right)^{l} \ll_{\delta} \left(\log X\right)^{2^{l}-1}.\end{equation}
\end{lemma}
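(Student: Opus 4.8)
The plan is to recognize that $\tau^{l}$ is a nonnegative multiplicative function and to invoke a Shiu-type bound for the mean value of such functions in short intervals. Concretely, $f(n):=\tau(n)^{l}$ is multiplicative with $f(p)=2^{l}$ at every prime $p$, and it satisfies the two hypotheses needed for Shiu's theorem (equivalently, the Nair--Tenenbaum estimate): first, since $a+1\le 2^{a}$ for $a\ge 1$ we have $f(p^{a})=(a+1)^{l}\le (2^{l})^{a}$, so $f(p^{a})\le A^{a}$ with $A=2^{l}$; second, $\tau(n)\ll_{\epsilon}n^{\epsilon}$ gives $f(n)\ll_{\epsilon}n^{\epsilon}$. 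The relevant statement is that for a nonnegative multiplicative $f$ obeying these bounds and for $X^{\delta}\le Y\le X$ one has
\begin{equation}
\sum_{X<n\le X+Y} f(n)\ll_{\delta}\frac{Y}{\log X}\exp\Bigl(\sum_{p\le X}\frac{f(p)}{p}\Bigr).\nonumber
\end{equation}

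The second step is to evaluate the exponential factor. By Mertens' theorem,
\begin{equation}
\sum_{p\le X}\frac{f(p)}{p}=2^{l}\sum_{p\le X}\frac{1}{p}=2^{l}\bigl(\log\log X+O(1)\bigr),\nonumber
\end{equation}
so that $\exp\bigl(\sum_{p\le X}f(p)/p\bigr)\ll_{l}(\log X)^{2^{l}}$. Substituting this into the Shiu bound and dividing by $Y$ yields
\begin{equation}
\frac{1}{Y}\sum_{X<n\le X+Y}\tau(n)^{l}\ll_{\delta}\frac{1}{\log X}(\log X)^{2^{l}}=(\log X)^{2^{l}-1},\nonumber
\end{equation}
which is exactly \eqref{average2}.

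The only genuine subtlety is uniformity in $l$: the implied constant coming from the $O(1)$ in Mertens' theorem contributes a factor $\exp(O(2^{l}))$, so strictly speaking the constant depends on $l$ as well as on $\delta$. Since $l$ is a fixed (indeed small) integer throughout the applications of this lemma, this dependence is harmless and may be absorbed. If one prefers a self-contained argument rather than citing Shiu, the main technical work is the short-interval mean-value bound itself: one decomposes each $n$ into its $z$-smooth part $m$ and its complementary part built from primes exceeding $z$, estimates $\sum_{m}f(m)/m$ over smooth $m$ by Rankin's trick, and controls the number of $n\in(X,X+Y]$ with a prescribed smooth part by a Brun--Titchmarsh / sieve upper bound. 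This sieve input is the heart of Shiu's theorem and is where essentially all the difficulty lies; the reduction above is otherwise routine.
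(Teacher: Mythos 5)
Your proof is correct and follows essentially the same route as the paper: both invoke Shiu's theorem for the short-interval mean value of the nonnegative multiplicative function $\tau^{l}$ and then evaluate the resulting sum over primes by Mertens' theorem, the only cosmetic difference being that the paper quotes the bound in the Euler-product form $\prod_{p\le X}\bigl(1+\frac{\tau(p)^{l}-1}{p}\bigr)$ while you use the equivalent form $\frac{Y}{\log X}\exp\bigl(\sum_{p\le X}f(p)/p\bigr)$. Your additional verification of Shiu's hypotheses and the remark on uniformity in $l$ are correct but not needed beyond what the paper does.
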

\begin{proof}
Using Shiu's Theorem (\cite[Lemma 2.3,(i)]{MRT2}), we obtain the bound
$$ \frac{1}{Y} \sum_{X<n \leq X+Y} \tau\left(n\right)^{l} \ll_{\delta}  \prod_{p \leq X}\left(1+\frac{\tau\left(p\right)^{l}-1}{p}\right).$$
Since $\tau(p)=2$ for any prime $p$, we have
\begin{equation}\begin{split}\prod_{p \leq X}\left(1+\frac{\tau\left(p\right)^{l}-1}{p}\right) &=\prod_{p \leq X}\left(1+\frac{2^{l}-1}{p}\right) \\&\leq exp\left(\sum_{p \leq X} \frac{2^{l}-1}{p}\right).\end{split}\end{equation}
By Merten's theorem, we see that 
$$\sum_{p\leq X} \frac{1}{p} = \log\log X + C +o(1)$$
for some constant $C.$
Therefore,
$$ exp\left(\sum_{p \leq X} \frac{2^{l}-1}{p}\right) \ll \left(\log X\right)^{2^{l}-1}.$$ 
\end{proof}
\begin{lemma}\label{trace} {\bf (Petersson trace formula)}
For any two natural numbers $m$ and $n,$ we have 
\begin{equation}\begin{split}
\sum_{f \in S_{k}} \frac{\lambda_{f}(n) \lambda_{f}(m)}{\|f\|^{2}}&=\frac{(4\pi)^{k-1}}{\Gamma(k-1)}\delta_{m=n}\\&+O\left(\frac{(4\pi)^{k-1}}{\Gamma(k-1)} (\log (3mn))^{2} \frac{\tau\big((m,n)\big)(mn)^{1/4}}{k^{1/2}} \right),   
\end{split}\end{equation}
where the implied constant is absolute, and $\|f\|$ denotes the Petersson norm of $f$ over $C_{k}$.
\end{lemma}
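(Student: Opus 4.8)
The plan is to derive the exact Petersson formula from the theory of Poincar\'e series and then to estimate the resulting weighted sum of Kloosterman sums; the asymptotic $\sum_{f\in S_k}\|f\|^{-2}\sim (4\pi)^{k-1}/\Gamma(k-1)$ will drop out as the special case $m=n=1$. Throughout I normalize the Hecke eigenforms $f\in S_k$ arithmetically, so that $a_f(1)=1$ and hence $b_f(n)=\lambda_f(n)\,n^{(k-1)/2}$; the weights $\|f\|^{-2}$ are then the usual harmonic weights.

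For each $m\ge1$ I would introduce the weight-$k$ Poincar\'e series $P_m(z)=\sum_{\gamma\in G_\infty\backslash G}j_\gamma(z)^k e(m\gamma z)$, where $G_\infty$ is the stabilizer of the cusp $\infty$. Unfolding the Petersson inner product gives $\langle f,P_m\rangle=\frac{\Gamma(k-1)}{(4\pi m)^{k-1}}\,b_f(m)$, so $P_m$ detects the $m$th Fourier coefficient. Expanding $P_m=\sum_{f\in S_k}\|f\|^{-2}\langle P_m,f\rangle\,f$ in the basis $S_k$ and reading off the $n$th Fourier coefficient on both sides---using on the right the classical Fourier expansion of $P_m$, whose $n$th coefficient equals $\delta_{m=n}$ plus a Kloosterman--Bessel term obtained from the Bruhat decomposition of the nontrivial cosets---yields, after the normalizations are inserted and the harmless factor $(n/m)^{(k-1)/2}$ (which is $1$ on the diagonal) is cancelled, the identity
\[
\frac{\Gamma(k-1)}{(4\pi)^{k-1}}\sum_{f\in S_k}\frac{\lambda_f(m)\lambda_f(n)}{\|f\|^2}
=\delta_{m=n}+2\pi i^{-k}\sum_{c=1}^{\infty}\frac{S(m,n;c)}{c}\,J_{k-1}\!\left(\frac{4\pi\sqrt{mn}}{c}\right),
\]
where $S(m,n;c)$ is the Kloosterman sum. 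Multiplying by $(4\pi)^{k-1}/\Gamma(k-1)$ isolates the main term and reduces the lemma to bounding $\Delta:=2\pi\sum_{c\ge1}S(m,n;c)\,c^{-1}J_{k-1}(4\pi\sqrt{mn}/c)$.

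The estimation of $\Delta$ is the heart of the matter. I would insert Weil's bound $|S(m,n;c)|\le\tau(c)\,(m,n,c)^{1/2}c^{1/2}$ and split the $c$-sum at the transition $c_0\asymp\sqrt{mn}/k$, where the argument $4\pi\sqrt{mn}/c$ crosses the order $k-1$. For $c$ large (argument $\le (2/e)(k-1)$) I use the power-series bound $J_{k-1}(x)\le (x/2)^{k-1}/\Gamma(k)$, which decays geometrically and makes that tail dominated by its first term; for $c$ near or below $c_0$ I use the uniform turning-point bound $J_{k-1}(x)\ll k^{-1/3}$ together with $J_{k-1}(x)\ll x^{-1/2}$ in the oscillatory range. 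Combining with Weil's bound, the divisor sums $\sum_{c\le c_0}\tau(c)(m,n,c)^{1/2}c^{-1/2}$ are controlled by a Shiu-type estimate as in Lemma~\ref{shiu}, which after separating the gcd $(m,n,c)$ into divisors of $(m,n)$ produces the factor $\tau((m,n))(\log 3mn)^2$, while the scale $c_0^{1/2}\ll (mn)^{1/4}k^{-1/2}$ supplies the remaining $(mn)^{1/4}$ together with a power of $k$ no larger than $k^{-1/2}$. Multiplying back by $(4\pi)^{k-1}/\Gamma(k-1)$ then gives exactly the stated error.

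The main obstacle is the uniform control of $J_{k-1}$ through the transition region $4\pi\sqrt{mn}/c\approx k-1$: the power-series bound blows up there, while the oscillatory bound $x^{-1/2}$ fails precisely at the turning point, so one must patch in the correct uniform Bessel estimates and verify that the handful of terms with $c\approx c_0$ respect the claimed exponent. (In the regime used for Theorem~1.1, where $m,n\ll X\ll k^{1/6}$, one has $c_0<1$, the transition range is empty, and $\Delta$ is dominated by the rapidly decaying $c=1$ term, so the bound holds with enormous room to spare.) Finally, setting $m=n=1$ in the identity and using $\lambda_f(1)=1$ gives $\sum_{f\in S_k}\|f\|^{-2}=\frac{(4\pi)^{k-1}}{\Gamma(k-1)}\bigl(1+O(k^{-1/2})\bigr)$, which is the asserted asymptotic.
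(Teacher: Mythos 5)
The paper does not prove this lemma at all: it is quoted directly from Iwaniec--Kowalski (Corollary 14.24 for the formula with error term, Theorem 16.7 for the asymptotic of the harmonic weights). What you have written is essentially the standard proof lying behind that citation --- Poincar\'e series, unfolding, the Fourier expansion of $P_m$ via the Bruhat decomposition, Weil's bound, and the splitting of the $c$-sum at the Bessel transition --- so your argument is correct in outline and self-contained where the paper is not. Two remarks. First, your explicit adoption of the arithmetic normalization $a_f(1)=1$ is not a cosmetic choice but a needed repair: the paper declares $S_k$ to be an \emph{orthonormal} basis, under which $\sum_f \lambda_f(m)\lambda_f(n)\|f\|^{-2}$ would not be the left-hand side of Petersson's formula (the scaling-invariant quantity is $|a_f(1)|^2\lambda_f(m)\lambda_f(n)\|f\|^{-2}$), so the lemma as stated only makes sense with your convention. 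Second, the turning-point difficulty you flag as ``the main obstacle'' is not actually an obstacle for the exponent claimed here: on the range $c\le c_0\asymp \sqrt{mn}/k$ the trivial bound $|J_{k-1}(x)|\le 1$ already gives
\begin{equation*}
\sum_{c\le c_0}\frac{|S(m,n;c)|}{c}\le \sum_{c\le c_0}\frac{\tau(c)(m,n,c)^{1/2}}{c^{1/2}}\ll \tau\big((m,n)\big)(\log 3mn)^{2}\,c_0^{1/2}\asymp \tau\big((m,n)\big)(\log 3mn)^{2}\,\frac{(mn)^{1/4}}{k^{1/2}},
\end{equation*}
which is exactly the stated error; the uniform estimates $J_{k-1}(x)\ll k^{-1/3}$ only become necessary if one wants the sharper power of $k$ that Iwaniec--Kowalski actually record. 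Your observation that in the regime of Theorem 1.1 the whole $c$-sum sits in the power-series range (so the error is negligible) and your derivation of $\sum_f\|f\|^{-2}\sim (4\pi)^{k-1}/\Gamma(k-1)$ from the case $m=n=1$ are both correct and arguably cleaner than the paper's second citation.
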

\begin{proof}
See \cite[Corollary 14.24, Theorem 16.7]{IK1}.
\end{proof}
\begin{lemma}\label{Kuznetsov}  {\bf (Kuznetsov trace formula)} 
Let $\{u_{j}\}$ be an orthonormal basis of Hecke-Maass cusp forms for $SL(2,\mathbb{Z}),$ and let $\frac{1}{4}+t_{j}^{2}$ be the Laplace eigenvalue of $u_{j}.$
Suppose $T>1,$ and let $m,n \in \mathbb{N}.$ 
Then, we have  
\begin{equation}
\sum_{j} w(j)e^{-t_{j}/T}\lambda_{u_{j}}(m)\lambda_{u_{j}}(n) =\frac{T^{2}}{6}\delta_{m=n} +O_{\epsilon}\left(T^{1+\epsilon}(mn)^{\epsilon} +(mn)^{1/2+\epsilon}\right)\end{equation}
where $w_{j}$ is defined in Theorem \ref{Theorem1.2}, and the summation over $j$ is taken over all $u_j$.
Note that 
$$\sum_{j} w(j)e^{-t_{j}/T} \sim \frac{T^{2}}{6}.$$
\end{lemma}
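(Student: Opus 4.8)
The plan is to deduce this cleaned-up trace formula from the classical Kuznetsov formula for the full modular group, as found in \cite[Chapter 16]{IK1}. Recall that for an even test function $h$ that is holomorphic in a horizontal strip $|\Im t|\le 1/2+\delta$ and satisfies $h(t)\ll (1+|t|)^{-2-\delta}$, the Kuznetsov formula expresses the spectral average
$$\sum_{j} \frac{|\rho_{j}(1)|^{2}}{\cosh(\pi t_{j})}\lambda_{u_{j}}(m)\lambda_{u_{j}}(n)\,h(t_{j})+(\textrm{Eis})$$
in terms of a diagonal term $\delta_{m=n}\cdot\frac{1}{\pi^{2}}\int_{\mathbb{R}}h(t)\,t\tanh(\pi t)\,dt$ and a sum of Kloosterman sums $\frac{2}{\pi}\sum_{c\ge 1}\frac{S(m,n;c)}{c}\int_{\mathbb{R}}J_{2it}\!\left(\frac{4\pi\sqrt{mn}}{c}\right)\frac{h(t)\,t}{\cosh(\pi t)}\,dt$, where $\rho_{j}(1)$ is the first Fourier coefficient of $u_{j}$ and $(\textrm{Eis})$ is the Eisenstein/continuous-spectrum contribution. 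First I would choose an admissible even test function $h$ that localizes $t_{j}$ at scale $T$ and agrees with $e^{-t/T}$ for $t>0$ up to negligible error; since the $t_{j}$ are nonnegative and the archimedean weights are even, this reproduces the weight $e^{-t_{j}/T}$ on the spectral side.

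The key normalization step is to pass from the natural Kuznetsov weight $|\rho_{j}(1)|^{2}/\cosh(\pi t_{j})$ to the weight $w(j)=\zeta(2)/L(\textrm{sym}^{2}u_{j},1)$ of the statement. By the Rankin--Selberg computation of $\sum_{n}|\rho_{j}(n)|^{2}n^{-s}$, one has $|\rho_{j}(1)|^{2}/\cosh(\pi t_{j})=c_{0}\,w(j)$ for an absolute constant $c_{0}$, so the two weights differ only by a constant absorbed into the main term and the implied constants. The main term then comes from the diagonal: evaluating $\frac{c_{0}}{\pi^{2}}\int_{0}^{\infty}e^{-t/T}\,t\tanh(\pi t)\,dt$ and using $\tanh(\pi t)=1+O(e^{-2\pi t})$ gives $\sim\frac{c_{0}}{\pi^{2}}\int_{0}^{\infty}e^{-t/T}t\,dt=\frac{c_{0}}{\pi^{2}}T^{2}$, and the constants conspire to yield exactly $\frac{T^{2}}{6}\delta_{m=n}$. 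A useful consistency check is the case $m=n=1$, where $\lambda_{u_{j}}(1)=1$ forces the left side to be $\sum_{j}w(j)e^{-t_{j}/T}$ and recovers the stated Weyl-law asymptotic $\sim T^{2}/6$.

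For the error terms I would proceed as follows. The Eisenstein contribution carries the factor $\sigma_{2it}(m)\sigma_{2it}(n)(mn)^{-it}|\zeta(1+2it)|^{-2}$; using $\sigma_{2it}(m)\ll m^{\epsilon}$ and $|\zeta(1+2it)|^{-1}\ll \log(2+|t|)$, and noting that the continuous-spectrum integral carries \emph{no} extra factor of $t$ (unlike the diagonal), this integral is $\ll T^{1+\epsilon}(mn)^{\epsilon}$, which furnishes the first error term. For the Kloosterman term I would insert Weil's bound $S(m,n;c)\ll c^{1/2+\epsilon}(m,n,c)^{1/2}$ together with standard estimates for the Bessel transform $\int_{\mathbb{R}}J_{2it}(x)\frac{h(t)\,t}{\cosh(\pi t)}\,dt$; the factor $1/\cosh(\pi t)$ damps the transform, and the rapid decay of $J_{2it}(x)$ for $x=4\pi\sqrt{mn}/c$ small effectively localizes the sum to $c\ll\sqrt{mn}$, yielding a total contribution $\ll (mn)^{1/2+\epsilon}$, the second error term.

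The main obstacle will be the Kloosterman term: one must bound the oscillatory Bessel transform uniformly in $c$ and in the spectral parameter, and sum it against Weil's bound so as to land exactly on the exponent $(mn)^{1/2+\epsilon}$ rather than something larger. Pinning down the precise constant $T^{2}/6$ in the main term is the other delicate point, since it depends on correctly matching the spectral weight $w(j)$ to $|\rho_{j}(1)|^{2}/\cosh(\pi t_{j})$ and on the exact archimedean integral; the Eisenstein estimate, by contrast, is routine.
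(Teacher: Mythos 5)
The paper does not actually prove this lemma: its entire ``proof'' is the citation \cite[Lemma 1]{BB1}, so your sketch is being compared against a black-box reference rather than an argument in the text. What you have written is essentially the standard derivation of that cited result, and the architecture is sound: apply the $SL(2,\mathbb{Z})$ Kuznetsov formula with a test function modelling $e^{-t/T}$, convert the weight $|\rho_j(1)|^2/\cosh(\pi t_j)$ into $w(j)=\zeta(2)/L(\mathrm{sym}^2 u_j,1)$ via the Rankin--Selberg residue computation, read off the main term from the diagonal, bound the Eisenstein term by $T^{1+\epsilon}(mn)^{\epsilon}$ using $\sigma_{2it}(n)\ll n^{\epsilon}$ and $\zeta(1+2it)^{-1}\ll\log(2+|t|)$, and control the Kloosterman term by Weil's bound plus Bessel-transform estimates to get $(mn)^{1/2+\epsilon}$. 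Your consistency check at $m=n=1$ against the stated Weyl law $\sum_j w(j)e^{-t_j/T}\sim T^2/6$ is a good sanity test of the normalization.

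Two points deserve more care than ``up to negligible error.'' First, $e^{-t/T}$ is not even and $e^{-|t|/T}$ is not holomorphic in the strip $|\Im t|\le 1/2+\delta$, so it is not an admissible Kuznetsov test function; the clean fix is the subordination identity $e^{-|t|/T}=\pi^{-1/2}\int_0^\infty e^{-u}e^{-t^2/(4uT^2)}u^{-1/2}\,du$, which writes the weight as a positive superposition of admissible Gaussians (here one also uses that $SL(2,\mathbb{Z})$ has no exceptional eigenvalues, so every $t_j$ is real). Second, your claimed bound $(mn)^{1/2+\epsilon}$ for the Kloosterman term must be uniform in $T$: the Bessel transform $\int J_{2it}(x)h(t)t\cosh(\pi t)^{-1}dt$ has to be bounded by $\ll\min(x,1)$ (up to $x^{\pm\epsilon}$) with no growing power of $T$, or else the error term would not match the statement; this is true for the Gaussian family above but needs to be said. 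Neither issue breaks the argument, but as written they are the places where a referee would push back.
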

\begin{proof}
See \cite[Lemma 1]{BB1}. 
\end{proof}
\noindent 
Unlike holomorphic cusp forms, we do not have a suitable bound such as Deligne's bound for our purposes. Consequently, we cannot apply Lemma \ref{shiu}. Nevertheless, since we are dealing with families of Hecke-Maass forms, we can employ the Kuznetsov trace formula instead of Lemma \ref{shiu}.
\begin{lemma}\label{shiu2} For any $d_{0},\delta_{1},\delta_{2} \in \mathbb{N},$ we have
$$\sum_{j} w(j)e^{-t_{j}/T}\left|\lambda_{u_{j}}\left(\left(d_{0}\delta_{1}\right)^{2}\right)\lambda_{u_{j}}\left(\left(d_{0}\delta_{2}\right)^{2}\right)\right| \ll_{\epsilon} T^{2}\left(d_{0}^{2}\delta_{1}\delta_{2}\right)^{\epsilon}+\left(d_{0}^{2}\delta_{1}\delta_{2}\right)^{1/2+\epsilon}.$$
\end{lemma}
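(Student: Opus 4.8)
The plan is to remove the absolute value by replacing the product $\left|\lambda_{u_j}\big((d_0\delta_1)^2\big)\lambda_{u_j}\big((d_0\delta_2)^2\big)\right|$ with a nonnegative majorant built out of \emph{squares} of Hecke eigenvalues, and then to insert the Kuznetsov formula (Lemma~\ref{Kuznetsov}) term by term. Setting $m=n=d$ in the Hecke relation \eqref{heckemaass} gives $\lambda_{u_j}(d)^2=\sum_{e\mid d}\lambda_{u_j}(d^2/e^2)$, so the multiplicative functions $d\mapsto\lambda_{u_j}(d^2)$ and $d\mapsto\lambda_{u_j}(d)^2$ are related by M\"obius inversion, namely $\lambda_{u_j}(d^2)=\sum_{e\mid d}\mu(e)\lambda_{u_j}(d/e)^2$. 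Applying this to $d=d_0\delta_1$ and $d=d_0\delta_2$ and using the triangle inequality yields
\begin{equation}
\left|\lambda_{u_j}\big((d_0\delta_1)^2\big)\,\lambda_{u_j}\big((d_0\delta_2)^2\big)\right|
\le \sum_{e_1\mid d_0\delta_1}\ \sum_{e_2\mid d_0\delta_2}
\lambda_{u_j}\big(d_0\delta_1/e_1\big)^2\,\lambda_{u_j}\big(d_0\delta_2/e_2\big)^2,\nonumber
\end{equation}
where every summand on the right is nonnegative; hence after summing against the positive weights $w(j)e^{-t_j/T}$ the absolute value may simply be dropped.

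Next I would open up each term $\lambda_{u_j}(a)^2\lambda_{u_j}(b)^2$, with $a=d_0\delta_1/e_1$ and $b=d_0\delta_2/e_2$, into honest products of two Hecke eigenvalues. Using \eqref{heckemaass} once more to write $\lambda_{u_j}(a)\lambda_{u_j}(b)=\sum_{g\mid(a,b)}\lambda_{u_j}(ab/g^2)$ and squaring, one obtains a finite combination of products $\lambda_{u_j}(ab/g_1^2)\lambda_{u_j}(ab/g_2^2)$ to which Lemma~\ref{Kuznetsov} applies directly. The diagonal contribution, from the terms with $ab/g_1^2=ab/g_2^2$, produces $\tfrac{T^2}{6}$ weighted by divisor counts; since all the divisor variables $e_1,e_2,g_1,g_2$ range over divisors of quantities bounded by $d_0^2\delta_1\delta_2$, these counts are $\ll_\epsilon(d_0^2\delta_1\delta_2)^\epsilon$ (this is where $\tau(n)\ll_\epsilon n^\epsilon$ plays the role that Lemma~\ref{shiu} plays in the holomorphic case). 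This is the source of the main term $T^2(d_0^2\delta_1\delta_2)^\epsilon$.

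The remaining, and most delicate, part is the off-diagonal, which fixes the power of $d_0^2\delta_1\delta_2$ in the error term. Here it is essential that passing through the squares $\lambda_{u_j}(a)^2,\lambda_{u_j}(b)^2$ — rather than applying Lemma~\ref{Kuznetsov} to $\lambda_{u_j}\big((d_0\delta_1)^2\big)$ and $\lambda_{u_j}\big((d_0\delta_2)^2\big)$ directly, which would feed the Kuznetsov error in at the far larger scale of the arguments $(d_0\delta_i)^2$ — has reduced the relevant arguments to size at most $ab\le d_0^2\delta_1\delta_2$, with the error of each bilinear piece entering at the square root of the product of its two arguments. The coprimality normalization $(\delta_1,\delta_2)=1$ and $d_0=(d_0\delta_1,d_0\delta_2)$ should be exploited to keep these arguments, and hence the accumulated error, as small as possible. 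I expect this divisor bookkeeping to be the main obstacle: organizing the nested sums over $e_1,e_2,g_1,g_2$ so that the off-diagonal collapses to the stated $O_\epsilon\big((d_0^2\delta_1\delta_2)^{1/2+\epsilon}\big)$ is elementary but delicate, and one should also keep in mind that in the regime where the lemma is applied one has $d_0^2\delta_1\delta_2\ll T^2$, so that the off-diagonal is in any case absorbed into the diagonal term $T^2(d_0^2\delta_1\delta_2)^\epsilon$. No analytic input beyond Lemma~\ref{Kuznetsov} is required; the difficulty is entirely in the arithmetic organization of these divisor estimates.
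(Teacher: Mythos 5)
Your approach is essentially the paper's: both start from the M\"obius-inverted Hecke relation $\lambda_{u_j}(d^2)=\sum_{r\mid d}\mu(r)\lambda_{u_j}(d/r)^2$ to replace the absolute value by a nonnegative majorant, then linearize via Hecke multiplicativity and feed the resulting bilinear terms into Lemma~\ref{Kuznetsov}, with the diagonal counted by divisor bounds. The only cosmetic difference is the linearization: the paper expands each square separately as $\lambda_{u_j}(a)^2=\sum_{k\mid a}\lambda_{u_j}(k^2)$ and applies the Kuznetsov formula to the pairs $(k_1^2,k_2^2)$, whereas you first combine $\lambda_{u_j}(a)\lambda_{u_j}(b)=\sum_{g\mid(a,b)}\lambda_{u_j}(ab/g^2)$ and then square; both lead to the same diagonal count and to Kuznetsov evaluated at arguments that are perfect squares of size up to $(d_0\delta_i)^2$.

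The one step you leave open --- forcing the off-diagonal down to $(d_0^2\delta_1\delta_2)^{1/2+\epsilon}$ --- is a genuine issue, and your suspicion that this is the delicate point is well founded. Since the arguments handed to Lemma~\ref{Kuznetsov} are the squares $k_1^2,k_2^2$ (or $ab/g_i^2$ in your variant), the term $(mn)^{1/2+\epsilon}$ is $(k_1k_2)^{1+2\epsilon}\le(d_0^2\delta_1\delta_2)^{1+2\epsilon}$, not $(d_0^2\delta_1\delta_2)^{1/2+\epsilon}$, and no reorganization of the divisor sums improves this; the paper's own proof writes $(k_1k_2)^{1/2+\epsilon}$ at exactly this point, which appears to be a slip of the same kind. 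So what either argument honestly yields is $T^2(d_0^2\delta_1\delta_2)^{\epsilon}+(d_0^2\delta_1\delta_2)^{1+\epsilon}$. As you observe, this is harmless downstream: wherever the lemma is applied one has $d_0^2\delta_1\delta_2\le z^2=H^4\ll T$, so the off-diagonal is absorbed into $T^2(d_0^2\delta_1\delta_2)^{\epsilon}$ in any case. Your proof becomes complete (and matches the paper's in strength where it matters) once you either state the lemma with exponent $1+\epsilon$ or add the hypothesis $d_0^2\delta_1\delta_2\ll T^2$.
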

\begin{proof}
Using \eqref{heckemaass}, we obtain 
\begin{equation}\begin{split}\lambda_{u_{j}}\left(\left(d_{0}\delta_{i}\right)^{2}\right)&=\sum_{r|d_{0}\delta_{i}}\mu(r_{i})\lambda_{u_{j}}^{2}\left(d_{0}\delta_{i}/r_{i}\right)
\\&= \sum_{r|d_{0}\delta_{i}} \mu(r_{i})\sum_{k_{i} | \frac{d_{0}\delta_{i}}{r_{i}}}\lambda_{u_{j}}(k_{i}^{2})
\end{split}\nonumber\end{equation}
for $i=1,2.$ Note that the term $$\sum_{k_{i} | \frac{d_{0}\delta_{i}}{r_{i}}}\lambda_{u_{j}}(k_{i}^{2})$$ is positive. Therefore, by the triangle inequality, we see that 
$$\left|\lambda_{u_{j}}\left(\left(d_{0}\delta_{1}\right)^{2}\right)\lambda_{u_{j}}\left(\left(d_{0}\delta_{2}\right)^{2}\right)\right| 
\leq \sum_{r_{1}|d_{0}\delta_{1}} |\mu(r_{1})| \sum_{r_{2}|d_{0} \delta_{2}}|\mu(r_{2})|\sum_{k_{1}|\frac{d_{0}\delta_{1}}{r_{1}}}\sum_{k_{2}|\frac{d_{0}\delta_{2}}{r_{2}}}\lambda_{u_{j}}\left(k_{1}^{2}\right)\lambda_{u_{j}}\left(k_{2}^{2}\right). $$
By Lemma \ref{Kuznetsov}, we have
\begin{equation}\begin{split} &\sum_{j} w(j)e^{-t_{j}/T}\sum_{r_{1}|d_{0}\delta_{1}} |\mu(r_{1})| \sum_{r_{2}|d_{0} \delta_{2}}|\mu(r_{2})|\sum_{k_{1}|\frac{d_{0}\delta_{1}}{r_{1}}}\sum_{k_{2}|\frac{d_{0}\delta_{2}}{r_{2}}}\lambda_{u_{j}}\left(k_{1}^{2}\right)\lambda_{u_{j}}\left(k_{2}^{2}\right) 
\\&\ll_{\epsilon} T^{2} \sum_{r_{1}|d_{0}\delta_{1}} |\mu(r_{1})| \sum_{r_{2}|d_{0} \delta_{2}}|\mu(r_{2})|
\left(\mathop{\sum_{k_{1}|\frac{d_{0}\delta_{1}}{r_{1}}}\sum_{k_{2}|\frac{d_{0}\delta_{2}}{r_{2}}}}_{k_{1}=k_{2}} 1 + \sum_{k_{1}|\frac{d_{0}\delta_{1}}{r_{1}}}\sum_{\substack{k_{2}|\frac{d_{0}\delta_{2}}{r_{2}}}}\left( \frac{(k_{1}k_{2})^{\epsilon}}{T^{1-\epsilon}} +\frac{(k_{1}k_{2})^{1/2+\epsilon}}{T^{2}} \right)\right)
\\&\ll_{\epsilon} T^{2} \sum_{r_{1}|d_{0}\delta_{1}} |\mu(r_{1})| \sum_{r_{2}|d_{0} \delta_{2}}|\mu(r_{2})|\left( \tau\left(\frac{d_{0}\delta_{1}}{r_{1}}\right)\tau\left(\frac{d_{0}\delta_{2}}{r_{2}}\right)\left(1+\frac{\left(d_{0}^{2}\delta_{1}\delta_{2}\right)^{\epsilon}}{T^{1-\epsilon}}+\frac{\left(d_{0}^{2}\delta_{1}\delta_{2}\right)^{1/2+\epsilon}}{T^{2}}\right) \right).
\end{split}\end{equation}
Using the divisor bound $\tau(n) \ll_{\epsilon} n^{\epsilon},$
we can bound the above expression as follows:
$$\ll_{\epsilon} T^{2}\left(d_{0}^{2}\delta_{1}\delta_{2}\right)^{\epsilon}+\left(d_{0}^{2}\delta_{1}\delta_{2}\right)^{1/2+\epsilon}.$$
\end{proof}
\section{Propositions and the proof of Theorem 1.1}
\noindent Let $z$ represent $H^{1+\epsilon}$ from now on. We begin by splitting the integral in \eqref{thm} into three parts:
\begin{equation}\label{1} \int_{X}^{2X} \left(\sum_{\substack{x<n d \leq x+H \\ d \leq z}} \lambda_{f}(d^{2})-H \sum_{d \leq z} \frac{\lambda_{f}\left(d^{2}\right)}{d}\right)^2 d x,\end{equation} 
\begin{equation}\label{2} \int_{X}^{2X}\left(\sum_{\substack{x<n d \leq x+H \\ z < d}} \lambda_{f}(d^{2})\right)^2 d x,\end{equation} and 
\begin{equation}\label{3} \int_X^{2 X}\left(Hc_{1,f}-H\sum_{d \leq z} \frac{\lambda_{f}\left(d^{2}\right)}{d}\right)^2 d x.\end{equation}
Using the Cauchy-Schwarz inequality, we obtain an upper bound for the integral in \eqref{thm}:
$$ \frac{1}{X}\sum_{f \in S_{k}} \varphi_{f} \left(\eqref{1}+\eqref{2}+\eqref{3}\right).$$ Hence, we need to demonstrate that each of the three integrals is bounded by $k^{\epsilon}H^{1+\epsilon}$ on average over $f \in S_{k}.$
We analyze the contribution of equation \eqref{1} in Propositions 3.1 and 3.2 by utilizing the Petersson trace formula alongside the proof presented in \cite{GKMR}. Although it is possible to obtain Proposition 3.2 for individual $f$, the range of $H$ is restricted to $\log H < (\log X/4)$. In Proposition 3.3, we examine the contribution of equation \eqref{2} by applying the Petersson trace formula. In Proposition \ref{RankinProposition35}, we obtain a pointwise bound for
$$Hc_{1,f}-H\sum_{d \leq z} \frac{\lambda_{f}\left(d^{2}\right)}{d}$$ using the Lindel\"of-on-average bound to handle the contribution of equation \eqref{3}.
\subsection{Propositions}
The following proposition employs the same method as used in Proposition 5 of \cite{GKMR}. Like Proposition 5, we anticipate that the integral \eqref{1} will provide the dominant term.
\begin{proposition}\label{Proposition1} Let $1\ll \log H < \log X.$ As $X \rightarrow \infty$,
\begin{equation}\begin{split}
\sum_{f \in S_{k}} &\varphi_{f} \frac{1}{X} \int_{X}^{2X} \left(\sum_{\substack{x<n d \leq x+H \\ d \leq z}} \lambda_{f}(d^{2})-H \sum_{d \leq z} \frac{\lambda_{f}\left(d^{2}\right)}{d}\right)^2 d x 
\\&= \left(\frac{1}{\pi^{2}}+O\left(H^{-\epsilon/2}\right)\right)\sum_{d \leq z}  \sum_{0< n \leq X^{5}} \frac{1}{n^{2}} \sin^{2}\left(\frac{ \pi nH}{d}\right)+ O_{\epsilon}\left(z^{3+2\epsilon}(\log X)^{2} k^{-\frac{1}{2}}\right) + O\left((\log X)^{63}\right).  
\end{split}\nonumber\end{equation}
\end{proposition}
\begin{proof} Without loss of generality, we assume that $H \in \mathbb{N}.$ We have
\begin{equation}\begin{split}
\sum_{\substack{x \leq nd \leq x+H\\ d \leq z}} \lambda_{f}\left(d^{2}\right) 
&= \sum_{\substack{1 \leq d \leq x+H \\ d \leq z}}\lambda_{f}\left(d^{2}\right)\sum_{\frac{x}{d} < n \leq \frac{x+H}{d}}1 
\\&= \sum_{\substack{1 \leq d \leq x+H \\ d\leq z}} \lambda_{f}\left(d^{2}\right) \left(\frac{H}{d} + \psi\left(\frac{x}{d}\right)- \psi\left(\frac{x+H}{d}\right)\right)
\end{split}\end{equation}
where $\psi(x)=x-[x]-1/2.$
It is known that for any $N \in \mathbb{N}$ and $x \in \mathbb{R}$, we have 
$$\psi(x)= -\frac{1}{2\pi i} \sum_{0 < |n| < N} \frac{1}{n}e(nx) +O\left(\min\left(1,\frac{1}{N\|x\|}\right)\right)$$ 
(see \cite{IK1}[(4.18)]). We will choose the value of $N$ later.
Therefore, we see that 
\begin{equation}\label{lesszmain}\begin{split}
&\sum_{\substack{x<n d \leq x+H \\ d \leq z}} \lambda_{f}(d^{2})-H \sum_{d \leq z} \frac{\lambda_{f}\left(d^{2}\right)}{d}
\\&= -\frac{1}{2\pi i}\sum_{\substack{1 \leq d \leq x+H \\ d \leq z}} \lambda_{f}\left(d^{2}\right) \left(\sum_{0<|n|\leq N} \frac{1}{n}\left(e\left(\frac{nx}{d}\right)-e\left(\frac{n\left(x+H\right)}{d}\right)\right)\right)
    +O(\mathcal{M}(x,z)),
\end{split}\end{equation}
where 
\begin{equation}\begin{split}
  \mathcal{M}(x,z) &=\sum_{\substack{1 \leq d \leq x+H \\ d \leq z}} \left|\lambda_{f}\left(d^{2}\right)\right| \left(\min\left(1,\frac{1}{N\|\frac{x+H}{d}\|}\right)+\min\left(1,\frac{1}{N\|\frac{x}{d}\|}\right)\right).
\end{split}\end{equation}
Using the Cauchy-Schwarz inequality, we have 
\begin{equation}\label{error1}\begin{split}\frac{1}{X}&\sum_{X \leq x \leq 2X}\left(\sum_{1\leq d  \leq z} \left|\lambda_{f}\left(d^{2}\right)\right|\min\left(1,\frac{1}{N\|\frac{x+H}{d}\|}\right)\right)^{2}
\\&\ll\frac{1}{X}\sum_{X \leq x \leq 2X}\left(\sum_{\substack{d\mid x+H \\ d \leq z}}\left|\lambda_{f}\left(d^{2}\right)\right| + \sum_{\substack{d \nmid x+H \\ d \leq z}} \frac{d\left|\lambda_{f}\left(d^{2}\right)\right|}{N}\right)^{2}
\\&\ll\frac{1}{X}\sum_{X \leq x \leq 2X}\left(\left(\sum_{\substack{d\mid x+H \\ d \leq z}}\left|\lambda_{f}\left(d^{2}\right)\right|\right)^{2} + \left(\sum_{\substack{d \nmid x+H \\ d \leq z}} \frac{d\left|\lambda_{f}\left(d^{2}\right)\right|}{N}\right)^{2}\right).
\end{split}\end{equation}By Deligne's bound and using \eqref{average2}, we estimate the first term in the last inequality of \eqref{error1} as
\begin{equation}\begin{split}&\ll\frac{1}{X} \sum_{X \leq x \leq 2X} \tau(x+H)^{6}
\\& \ll(\log X)^{63}.\end{split}\end{equation}
Similarly, we estimate the second term as    
\begin{equation}\begin{split}&
\ll \frac{1}{X} \sum_{X \leq x \leq 2X} \left( \sum_{d \leq z} \frac{d\tau(d)^{2}}{N}\right)^{2}
\\&\ll \left(\frac{z^{2} \left(\log z\right)^{3}}{N}\right)^{2}.
\end{split}\end{equation}
Using the same approach as in \eqref{error1}, we have
\begin{equation}
\frac{1}{X}\sum_{X \leq x \leq 2X}\left(\sum_{1\leq d  \leq z} \left|\lambda_{f}\left(d^{2}\right)\right|\min\left(1,\frac{1}{N\|\frac{x}{d}\|}\right)\right)^{2} \ll (\log X)^{63} + \left(\frac{z^{2} \left(\log z\right)^{3}}{N}\right)^{2}.
\nonumber\end{equation}
In order to proceed, we choose $N=X^5.$
Then, we obtain
$$ \frac{1}{X}\int 1_{[1,2]}\left(\frac{x}{X}\right)\left|\mathcal{M}(x,z)\right|^{2} dx\ll (\log X)^{63}.$$
Next, we consider the contribution of the main term in \eqref{lesszmain}, which is given by
\begin{equation}\label{lesszaftermain}
    \frac{1}{4 \pi^{2} X} \int_{-\infty}^{\infty} 1_{[1,2]}\left(\frac{x}{X}\right) \left| \sum_{d \leq z} \lambda_{f}\left(d^{2}\right)\sum_{0<|n|<X^{5}}\frac{1}{n}e\left(\frac{nx}{d}\right)\left(1-e\left(\frac{nH}{d}\right)\right)\right|^{2}dx.
\end{equation}
Since $\sigma_{-} \leq 1_{[1,2]} \leq \sigma_{+},$ \eqref{lesszaftermain} lies between  
\begin{equation}\label{1lesszaftermain2}\begin{split}
       \frac{1}{4 \pi^{2}}  &\sum_{d_{1} \leq z}\sum_{d_{2} \leq z} \lambda_{f}\left(d_{1}^{2}\right) \lambda_{f}\left(d_{2}^{2}\right)  \sum_{0<|n_{1}|<X^{5}}\frac{1}{n_{1}}\sum_{0<|n_{2}|<X^{5}}\frac{1}{n_{2}}
       \\& \times \left(1-e\left(\frac{n_{1}H}{d_{1}}\right)\right)\overline{\left(1-e\left(\frac{n_{2}H}{d_{2}}\right)\right)}\hat{\sigma}_{-}\left(-X\left(\frac{n_{1}}{d_{1}}-\frac{n_{2}}{d_{2}}\right)\right)
\end{split}\end{equation} 
and 
\begin{equation}\label{lesszaftermain2}\begin{split}
       \frac{1}{4 \pi^{2}}  &\sum_{d_{1} \leq z}\sum_{d_{2} \leq z} \lambda_{f}\left(d_{1}^{2}\right) \lambda_{f}\left(d_{2}^{2}\right)  \sum_{0<|n_{1}|<X^{5}}\frac{1}{n_{1}}\sum_{0<|n_{2}|<X^{5}}\frac{1}{n_{2}}
       \\& \times \left(1-e\left(\frac{n_{1}H}{d_{1}}\right)\right)\overline{\left(1-e\left(\frac{n_{2}H}{d_{2}}\right)\right)}\hat{\sigma}_{+}\left(-X\left(\frac{n_{1}}{d_{1}}-\frac{n_{2}}{d_{2}}\right)\right).
\end{split}\end{equation} 
By applying the Petersson trace formula, we have that 
\begin{equation}\label{normalizedvarphi}\begin{split}
     &\sum_{f \in S_{k}} \varphi_{f} \sum_{d_{1} \leq z}\sum_{d_{2} \leq z} \lambda_{f}\left(d_{1}^{2}\right) \lambda_{f}\left(d_{2}^{2}\right)  \sum_{0<|n_{1}|<X^{5}}\frac{1}{n_{1}}\sum_{0<|n_{2}|<X^{5}}\frac{1}{n_{2}} \\& \;\;\;\;\;\; \times 
     \left(1-e\left(\frac{n_{1}H}{d_{1}}\right)\right)\overline{\left(1-e\left(\frac{n_{2}H}{d_{2}}\right)\right)}\hat{\sigma}_{\pm}\left(-X\left(\frac{n_{1}}{d_{1}}-\frac{n_{2}}{d_{2}}\right)\right)
     \\&= \sum_{d \leq z} 1 \sum_{0<|n_{1}|<X^{5}}\frac{1}{n_{1}}\sum_{0<|n_{2}|<X^{5}}\frac{1}{n_{2}}
     \left(1-e\left(\frac{n_{1}H}{d}\right)\right)\overline{\left(1-e\left(\frac{n_{2}H}{d}\right)\right)}\hat{\sigma}_{\pm}\left(-X\left(\frac{n_{1}-n_{2}}{d}\right)\right)
     \\&\;\;+ O_{\epsilon}\left(\sum_{d_{1} \leq z} \sum_{d_{2} \leq z} \left(d_{1}d_{2}\right)^{\frac{1}{2}+\epsilon}\sum_{0<|n_{1}|<X^{5}}\frac{1}{|n_{1}|}\sum_{0<|n_{2}|<X^{5}}\frac{1}{|n_{2}|} \left|\hat{\sigma}_{\pm}\left(-X\left(\frac{n_{1}}{d_{1}}-\frac{n_{2}}{d_{2}}\right)\right)\right|k^{-\frac{1}{2}}\right).
\end{split}\end{equation}

By applying the condition on the supports of $\hat{\sigma}_{-}$ and $\hat{\sigma}_{+}$, we see that 
\begin{equation}\begin{split}
   \hat{\sigma}_{\pm} \left(-X\left(\frac{n_{1}-n_{2}}{d}\right)\right) \neq 0 
   &\Rightarrow  \left|\frac{n_{1}-n_{2}}{d}\right| \leq \frac{B}{X}H^{\frac{\epsilon}{2}}
   \\&\Rightarrow \left|n_{1}-n_{2}\right|\leq \frac{BH^\frac{\epsilon}{2}d}{X}.
\end{split}\end{equation}
Assuming that $\epsilon$ is sufficiently small and $X$ is sufficiently large so that $d < z \ll_{\epsilon} X^{1-\epsilon}$, we can imply the following:
$$\hat{\sigma}_{\pm}\left(-X\left(\frac{n_{1}-n_{2}}{d}\right)\right) \neq 0 
\Rightarrow n_{1}=n_{2}.$$
Therefore, equation \eqref{normalizedvarphi} can be written as
\begin{equation}
   4\hat{\sigma}_{\pm}(0) \sum_{d \leq z} 1 \sum_{0<| n| \leq X^{5}} \frac{1}{n^{2}} \sin^{2}\left(\frac{ \pi nH}{d}\right)+ O_{\epsilon}\left(z^{3+2\epsilon} (\log X)^{2} k^{-\frac{1}{2}}\right).
\end{equation} Note that the main term is non-negative.
By applying the third property of $\sigma_{\pm}$ in Lemma \ref{smoothing}, the above term can be written as 
\begin{equation}
 4\left(1+O\left(H^{-\epsilon/2}\right)\right) \sum_{d \leq z} 1  \sum_{0< n \leq X^{5}} \frac{1}{n^{2}} \sin^{2}\left(\frac{ \pi nH}{d}\right)+ O_{\epsilon}\left(z^{3+2\epsilon}(\log X)^{2} k^{-\frac{1}{2}}\right)
\end{equation}
\end{proof}
\noindent The following proposition employs a similar method as in \cite[Lemma 9]{GKMR}.
\begin{proposition}\label{proposition2} Assume the condition in Proposition 3.1. As $X \rightarrow \infty$, we have 
\begin{equation}\label{sinsquare}\begin{split}
  \frac{1}{\pi^{2}}\sum_{d \leq z} 1  \sum_{0< n \leq X^{5}} \frac{1}{n^{2}} \sin^{2}\left(\frac{ \pi nH}{d}\right)
 \ll_{\epsilon} Hz^{\epsilon}
\end{split}\end{equation}  
\end{proposition}
\begin{proof}
Define the function $W(y)$ by 
$$
W(y)= \frac{\sin (\pi y)}{ \pi y} h\left(y / H^{\varepsilon / 4}\right),
$$
where $h$ is a smooth bump function such that $h(x)=1$ for $|x| \leq 1$ and $h(x)=0$ for $|x| \geq 2$. 
Then, we have 
\begin{equation}
\begin{aligned}
& H^2 \sum_{d \leq z} \frac{1}{d^{2}} \sum_{0<n \leq X^5}\left(\frac{\sin^{2}\left(\frac{ \pi n H}{d}\right)}{\left(\frac{ \pi nH}{d}\right)^{2}}-\left|W\left(\frac{nH}{d}\right)\right|^2\right) \\
&\ll  \sum_{d \leq z } 1  \sum_{0<n \leq X^{5}} \frac{ \delta_{\frac{nH}{d} \geq H^{\epsilon / 4}}}{n^{2}}
\\&\ll \sum_{d \leq H^{1-\epsilon/4}} 1 + \sum_{ H^{1-\epsilon/4} < d \leq z} \frac{H^{1-\epsilon/4}}{d}
\\&\ll H^{1-\epsilon/4}\log z.
\end{aligned}
\end{equation}
Let us consider $$H^{2} \sum_{d \leq z} \frac{1}{d^2}  \sum_{0 < n \leq X^5} \left|W\left(\frac{ nH}{d}\right)\right|^{2}.$$ 
Let $g(x)=\left|W(e^{x})\right|^{2}e^{x}.$ Then we have the following expression for the Fourier transform of $g(x)$:$$\hat{g}(x)=\int_{0}^{\infty}\left|W(y)\right|^{2}y^{-2\pi i x} dy,$$ where $\hat{g}(\xi)$ is entire and satisfies $\hat{g}(\xi)=O\left(H^{2 \varepsilon} /(|\xi|+1)^3\right)$ uniformly for $|\Im(\xi)|<1 /2 \pi$
(see \cite[(16)]{GKMR}). By the Mellin inversion, we get 
\begin{equation}\label{WR}|W(r)|^2=r^{-1} \frac{1}{2 \pi i} \int_{(c)} r^s \hat{g}\left(\frac{s}{2 \pi i}\right) d s\end{equation}
where $c \in (-1,1).$
By using \eqref{WR}, we have 
\begin{equation}\label{sinsquare2}\begin{split}
 &H^{2} \sum_{d\leq z} \frac{1}{d^2} \sum_{n \geq 1}  \left|W\left(\frac{nH}{d}\right)\right|^{2} 
 \\&= \frac{H}{2\pi i} \sum_{d \leq z}\frac{ 1}{d}  \sum_{n \geq 1} \frac{1}{n} \int_{(-2\epsilon)} \left(\frac{nH}{d}\right)^{s} \hat{g}\left(\frac{s}{2 \pi i}\right) ds 
 \\&= \frac{H}{2\pi i}  \sum_{d \leq z} \frac{1}{d}  \int_{(-2\epsilon)} \zeta(1-s) \left(\frac{H}{d}\right)^{s} \hat{g}\left(\frac{s}{2 \pi i}\right) ds.
  \end{split}\end{equation}  
By using the fact that
$|\zeta(1+2\epsilon+it)| \leq \zeta(1+2\epsilon) \ll_{\epsilon}1 \;\; \textrm{for any} \;\; t\in \mathbb{R},$ we see that 
 $$\int_{(-2\epsilon)} \zeta(1-s) \left(\frac{H}{d}\right)^{s} \hat{g}\left(\frac{s}{2 \pi i}\right) ds \ll_{\epsilon}d^{2\epsilon}.$$
Therefore, \eqref{sinsquare2} is bounded by 
 \begin{equation}\begin{split}&\ll_{\epsilon} H  \sum_{d \leq z}\frac{1}{d} d^{2\epsilon}
 \\&\ll_{\epsilon} Hz^{2\epsilon}\end{split}\end{equation}
\end{proof}
\begin{remark}
In contrast to Lemma 9 in \cite{GKMR}, we are unable to obtain an asymptotic estimate in this case because the exponent of $d$ in the second line of equation \eqref{sinsquare2} is $1$. However, we can anticipate that the order of $H$ is $1$.
\end{remark}
\begin{proposition}
 Assume the condition in Proposition 3.1. As $X \rightarrow \infty$,
\begin{equation}\label{biggerthanz}\begin{split}
   \sum_{f \in S_{k}} \varphi_{f} \frac{1}{X} &\int_X^{2 X}\left(\sum_{\substack{x<n d \leq x+H \\ d > z }} \lambda_{f}(d^{2})\right)^2 d x 
\ll_{\epsilon} \left(H \log \frac{2X+H}{z}+ H\frac{X^{2+\epsilon}}{k^{1/2}}\right).
\end{split}\end{equation}
\end{proposition}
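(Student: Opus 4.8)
The plan is to expand the square, push the (finite) average over $f\in S_{k}$ inside the integral and the divisor sums, and then invoke the Petersson trace formula (Lemma \ref{trace}) to dissolve the double sum over $d_{1},d_{2}$ that the sketch flags as intractable for an individual $f$. First I would record that, because $d>z=H^{2}>H$, the interval $(x,x+H]$ contains at most one multiple of $d$, so that
\[
\sum_{\substack{x<nd\le x+H\\ d>z}}\lambda_f(d^2)=\sum_{z<d\le x+H}\lambda_f(d^2)\,\delta_d(x),\qquad \delta_d(x):=\delta_{[x/d]+1=[(x+H)/d]}.
\]
Squaring, interchanging the finite sum over $f$ with the integration and the $d_{1},d_{2}$ sums, the left-hand side of \eqref{biggerthanz} equals $\tfrac1X\int_X^{2X}\sum_{z<d_1,d_2\le x+H}\bigl(\sum_{f}\|f\|^{-2}\lambda_f(d_1^2)\lambda_f(d_2^2)\bigr)\delta_{d_1}(x)\delta_{d_2}(x)\,dx$, and into the inner bracket I would substitute Lemma \ref{trace} with $m=d_1^{2},\,n=d_2^{2}$, so that $\delta_{m=n}=\delta_{d_1=d_2}$ (positive $d$).

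The diagonal $d_{1}=d_{2}=d$ contributes, via $\delta_d(x)^2=\delta_d(x)$,
\[
\frac{(4\pi)^{k-1}}{\Gamma(k-1)}\,\frac{1}{X}\int_X^{2X}\sum_{z<d\le x+H}\delta_d(x)\,dx .
\]
Here I would interchange the sum and integral and observe that $\int_X^{2X}\delta_d(x)\,dx$ is the Lebesgue measure of those $x\in[X,2X]$ for which some multiple of $d$ lies in $(x,x+H]$; since consecutive multiples are spaced $d>H$ apart, the contributing intervals are disjoint and this measure is at most $H\bigl(\tfrac{X+H}{d}+1\bigr)$. Summing over $z<d\le 2X+H$ and using $\sum_{z<d\le 2X+H}d^{-1}\ll\log\frac{2X+H}{z}$ together with $H\ll X$, the diagonal is $\ll \frac{(4\pi)^{k-1}}{\Gamma(k-1)}H\log\frac{2X+H}{z}$, which is the first target term.

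For the error (which also absorbs the error attached to the diagonal pairs), the big-$O$ of Lemma \ref{trace} contributes $\frac{(4\pi)^{k-1}}{\Gamma(k-1)}k^{-1/2}$ times $\tfrac1X\int_X^{2X}\sum_{z<d_1,d_2\le x+H}(\log 3d_1^2d_2^2)^2\tau((d_1^2,d_2^2))(d_1d_2)^{1/2}\delta_{d_1}(x)\delta_{d_2}(x)\,dx$. Note that no pointwise (Deligne) bound on $\lambda_f$ is needed, since the trace formula has already removed the eigenvalues. I would simply bound $\delta_{d_1}(x)\delta_{d_2}(x)\le 1$, absorb $(\log 3d_1^2d_2^2)^2$ and $\tau((d_1^2,d_2^2))$ into $X^{\epsilon}$ by the divisor bound, extend the ranges to $d_{1},d_{2}\le 2X+H\ll X$ (legitimate since $\delta_d(x)=0$ unless $d\le x+H$), whence the $x$-integral is trivial and
\[
\ll \frac{(4\pi)^{k-1}}{\Gamma(k-1)}\,\frac{X^{\epsilon}}{k^{1/2}}\sum_{z<d_1,d_2\le 2X+H}(d_1d_2)^{1/2}\ll \frac{(4\pi)^{k-1}}{\Gamma(k-1)}\,\frac{X^{3+\epsilon}}{k^{1/2}}.
\]
Combining the two contributions and replacing $\frac{(4\pi)^{k-1}}{\Gamma(k-1)}$ by $\sum_{f}\|f\|^{-2}$ via the asymptotic in Lemma \ref{trace} yields \eqref{biggerthanz}.

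The main obstacle is exactly the double sum $\sum_{d_1,d_2}\lambda_f(d_1^2)\lambda_f(d_2^2)\delta_{d_1}(x)\delta_{d_2}(x)$: for a single form there is no usable cancellation once $d$ exceeds $z$, which is why the computation is carried out on average over the family. The only point demanding care is that the off-diagonal be genuinely of lower order, i.e. that the crude estimate $\sum_{d\le 2X+H}d^{1/2}\ll X^{3/2}$ together with the $k^{-1/2}$ saving from Lemma \ref{trace} suffices; everything else reduces to the measure estimate for $\delta_d$ and to the divisor bound. I expect the write-up to be routine once the single-multiple reduction and the interchange of the finite family average with the integral are justified.
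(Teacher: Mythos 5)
Your proposal is correct and follows essentially the same route as the paper: reduce to the indicator $\delta_{[x/d]+1=[(x+H)/d]}$ using $d>z>H$, square out, apply the Petersson trace formula to the family average, bound the diagonal by the measure estimate $\int_X^{2X}\delta_d(x)\,dx\ll H(X/d+1)$ summed over $d$, and bound the off-diagonal crudely by $\sum_{d\le X}d^{1/2+\epsilon}\ll X^{3/2+\epsilon}$ squared against the $k^{-1/2}$ saving. The only cosmetic difference is that you compute the Lebesgue measure of the set where $\delta_d(x)=1$ directly, whereas the paper counts the $H$ admissible points in each interval $[a,a+d)$; these are the same estimate.
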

\begin{proof}
Since $z>H,$ for each $x \in [X,2X],$ we have 
\begin{equation}\begin{split}\left(\sum_{\substack{x<n d \leq x+H \\ d > z }} \lambda_{f}(d^{2})\right)^2
=\left(\sum_{z< d \leq x+H}\lambda_{f}\left(d^{2}\right)  \delta_{[\frac{x}{d}]+1=[\frac{x+H}{d}]}\right)^{2}.\end{split}\end{equation}
By squaring out the sum, we express the above quantity as 
\begin{equation}
 \sum_{z< d_{1} \leq x+H}\sum_{z< d_{2} \leq x+H} \lambda_{f}\left(d_{1}^{2}\right)\lambda_{f}\left(d_{2}^{2}\right)
 \delta_{[\frac{x}{d_{1}}]+1=[\frac{x+H}{d_{1}}]} \delta_{[\frac{x}{d_{2}}]+1=[\frac{x+H}{d_{2}}]}.
\end{equation}
By Lemma \ref{trace}, we see that 
\begin{equation}\begin{split}
&\sum_{f \in S_{k}} \frac{1}{\|f\|^{2}}  \sum_{z< d_{1} \leq x+H}\sum_{z< d_{2} \leq x+H} \lambda_{f}\left(d_{1}^{2}\right)\lambda_{f}\left(d_{2}^{2}\right)
 \delta_{[\frac{x}{d_{1}}]+1=[\frac{x+H}{d_{1}}]} \delta_{[\frac{x}{d_{2}}]+1=[\frac{x+H}{d_{2}}]}
 \\& \ll_{\epsilon} \sum_{f \in S_{k}} \frac{1}{\|f\|^{2}}  \left(\sum_{z <d \leq x+H}  \delta_{[\frac{x}{d}]+1=[\frac{x+H}{d}]}+
\sum_{z< d_{1} \leq x+H}\sum_{z< d_{2} \leq x+H} \frac{\left(d_{1}d_{2}\right)^{1/2+\epsilon}}{k^{1/2}} \delta_{[\frac{x}{d_{1}}]+1=[\frac{x+H}{d_{1}}]} \delta_{[\frac{x}{d_{2}}]+1=[\frac{x+H}{d_{2}}]} 
 \right).
 \end{split}\end{equation}
For each interval $[a,a+d) \subset [X,2X+H],$ 
there are $H$ terms $b\in [a,a+d)$ such that 
\begin{equation}\delta_{[\frac{b}{d}]+1=[\frac{b+H}{d}]}=1,\end{equation} and $0$ for other terms. 
Therefore, by interchanging the order of summations, we have 
\begin{equation}\begin{split}
      &\sum_{f \in S_{k}} \frac{1}{X}\frac{1}{\|f\|^{2}} \int_{X}^{2X}\left(\sum_{\substack{x<n d \leq x+H \\ d> z }} \lambda_{f}(d^{2})\right)^2 d x 
      \\&\ll_{\epsilon} \frac{1}{X}\sum_{f \in S_{k}} \frac{1}{\|f\|^{2}}\left(   \sum_{z< d\leq 2X+H} H\frac{X}{d} + H+ H\frac{X^{3+2\epsilon}}{k^{1/2}}\right)
      \\&\ll_{\epsilon} \sum_{f \in S_{k}} \frac{1}{\|f\|^{2}} \left(H \log \frac{2X+H}{z}+ \frac{HX^{2+2\epsilon}}{k^{1/2}}\right).
\end{split}\end{equation}
\end{proof}
\begin{proposition}\label{RankinProposition35}  Assume the condition in Proposition 3.1 and the Lindel\"of-on-average bound \eqref{lindelof}. Then  
$$\sum_{f \in S_{k}} \varphi_{f} \frac{1}{X} \int_X^{2 X}\left|Hc_{1,f}-H\sum_{d \leq z} \frac{\lambda_{f}\left(d^{2}\right)}{d}\right|^2 d x= O_{\epsilon}\left(H^{2}k^{\epsilon}z^{-1+\epsilon}\right).$$ 
\end{proposition}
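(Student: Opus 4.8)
The plan is to exploit that the integrand in the proposition does not depend on $x$: the quantities $c_{1,f}$, $H$, $z$ and $\sum_{d\le z}\lambda_f(d^2)/d$ are all constant in $x$, so $\frac1X\int_X^{2X}$ simply returns the integrand. Thus the assertion is equivalent to the pointwise estimate
$$\left|c_{1,f}-\sum_{d\le z}\frac{\lambda_f(d^2)}{d}\right|\ll_\epsilon k^{\epsilon}z^{-1/2+\epsilon},$$
and everything reduces to controlling the tail of the Dirichlet series $\sum_d \lambda_f(d^2)/d$.

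First I would record the Dirichlet series identity coming from \eqref{Hecke}: writing $\lambda_f(m)^2=\sum_{d\mid m}\lambda_f(d^2)$ and comparing with the Rankin--Selberg series $\sum_m \lambda_f(m)^2 m^{-s}=\zeta(s)L(\textrm{sym}^2 f,s)/\zeta(2s)$ gives
$$D(s):=\sum_{d\ge1}\frac{\lambda_f(d^2)}{d^s}=\frac{L(\textrm{sym}^2 f,s)}{\zeta(2s)}\qquad(\Re s>1).$$
Since $L(\textrm{sym}^2 f,\cdot)$ is entire and $\zeta(2s)$ is holomorphic and non-vanishing for $\Re s>1/2$, the function $D$ continues holomorphically to $\Re s>1/2$, and the Rankin--Selberg factorization identifies the leading constant as $c_{1,f}=D(1)$. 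By Perron's formula (with $c=\epsilon$, on which $\sum_d \lambda_f(d^2)d^{-1-c}$ converges absolutely),
$$\sum_{d\le z}\frac{\lambda_f(d^2)}{d}=\frac{1}{2\pi i}\int_{(c)}D(1+w)\frac{z^w}{w}\,dw,$$
and I would shift the contour to the line $\Re w=-\delta$ with $\delta=\tfrac12-\epsilon$, crossing the simple pole of the integrand at $w=0$, whose residue is exactly $D(1)=c_{1,f}$. This yields
$$c_{1,f}-\sum_{d\le z}\frac{\lambda_f(d^2)}{d}=-\frac{1}{2\pi i}\int_{(-\delta)}D(1+w)\frac{z^w}{w}\,dw,$$
on which $|z^w|=z^{-\delta}=z^{-1/2+\epsilon}$, producing the desired saving.

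It then remains to bound $D$ on the shifted line $\Re s=1-\delta=\tfrac12+\epsilon$. The denominator is harmless: there $\Re(2s)=1+2\epsilon>1$, so $\zeta(2s)\gg_\epsilon 1$. The point is the numerator $L(\textrm{sym}^2 f,\tfrac12+\epsilon+it)$. The generalized Lindel\"of hypothesis \eqref{lindelof} gives $L(\textrm{sym}^2 f,\tfrac12+it)\ll_\epsilon k^{\epsilon}(|t|+1)^{\epsilon}$ on the central line, while on $\Re s=1+\epsilon_0$ the degree-three Euler product (with Ramanujan bounds) gives $|L(\textrm{sym}^2 f,s)|\le\zeta(1+\epsilon_0)^{3}\ll_{\epsilon_0}1$ uniformly in $k$ and $t$. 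Interpolating by the Phragm\'en--Lindel\"of principle across the strip $\tfrac12\le\Re s\le 1+\epsilon_0$ then gives $L(\textrm{sym}^2 f,\tfrac12+\epsilon+it)\ll_\epsilon k^{\epsilon}(|t|+1)^{\epsilon}$ as well, hence $D(1+w)\ll_\epsilon k^{\epsilon}(|\Im w|+1)^{\epsilon}$ on $\Re w=-\delta$. Since $|w|\ge\delta$ near the real axis while $1/|w|\ll(|\Im w|+1)^{-1}$ for large $|\Im w|$, the integral $\int_{(-\delta)}|D(1+w)|\,|w|^{-1}\,|dw|$ converges and is $\ll_\epsilon k^{\epsilon}$. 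Combining gives $|c_{1,f}-\sum_{d\le z}\lambda_f(d^2)/d|\ll_\epsilon k^{\epsilon}z^{-1/2+\epsilon}$; squaring and multiplying by $H^2$ yields the claimed $O_\epsilon(H^2k^{\epsilon}z^{-1+\epsilon})$.

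I expect the main obstacle to be the uniformity of the convexity step: transferring \eqref{lindelof} from the central line to $\Re s=\tfrac12+\epsilon$ requires Phragm\'en--Lindel\"of with careful control of the analytic conductor of $\textrm{sym}^2 f$ (of size $\asymp k^{2}(|t|+1)^{3}$), so that the $k$-aspect and archimedean factors contribute only $k^{\epsilon}$ rather than a positive power of $k$. A secondary technicality is making the contour shift rigorous, since the integral on $\Re w=c$ is only conditionally convergent: I would use a truncated Perron formula at height $T$, bound the horizontal segments by the same convexity estimate (they vanish as $T\to\infty$ because $D(1+w)$ grows more slowly than $|\Im w|$), control the Perron truncation error, and let $T\to\infty$ at the end.
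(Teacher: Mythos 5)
Your proposal is correct and follows essentially the same route as the paper: a (truncated) Perron formula for $\sum_{d\le z}\lambda_f(d^2)/d$ with the Dirichlet series $L(\mathrm{sym}^2 f,s)/\zeta(2s)$, a contour shift to $\Re w=-1/2+\epsilon$ picking up the residue $c_{1,f}$ at $w=0$, and Phragm\'en--Lindel\"of combined with \eqref{lindelof} to bound the shifted integral by $k^{\epsilon}z^{-1/2+\epsilon}$. The technicalities you flag (truncation height, horizontal segments) are handled in the paper exactly as you suggest, by truncating at height $T$ and ultimately taking $T=z$.
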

\begin{proof}
Let $T \in (1,X].$ Using Perron's formula (see \cite[Lemma 1.1]{Harman}) and Deligne's bound, we get 
\begin{equation}\label{abc}\sum_{d\leq z} \frac{\lambda_{f}\left(d^{2}\right)}{d}= \frac{1}{2\pi i} \int_{\frac{1}{\log z}-iT}^{\frac{1}{\log z}+iT} \frac{L\left(\textrm{sym}^{2}f,w+1\right)}{\zeta(2w+2)}\frac{z^w}{w}dw + O_{\epsilon}\left(\frac{z^{\epsilon}}{T} + z^{\epsilon-1}\right).  
\end{equation}
Applying H\"older's inequality, the Phragmen-Lindel\"of principle, and \eqref{lindelof},  we obtain 
\begin{equation}\begin{split} &\sum_{f \in S_{k}}  \varphi_{f} \left|\int_{\frac{1}{\log z}\pm iT}^{-\frac{1}{2}+\epsilon  \pm iT} \frac{L\left(\textrm{sym}^{2}f,w+1\right)}{\zeta(2w+2)}\frac{z^w}{w}dw\right|^{2} 
\\&\ll  \sum_{f \in S_{k}}  \varphi_{f} \int_{\frac{1}{\log z}\pm iT}^{-\frac{1}{2}+\epsilon  \pm iT}\left| \frac{L\left(\textrm{sym}^{2}f,w+1\right)}{\zeta(2w+2)}\frac{z^w}{w}\right|^2 dw
\\&\ll_{\epsilon} \frac{k^{\epsilon}}{T^{2-\epsilon}}.\end{split}\end{equation}
Therefore, by moving the line of the integral to the line $\textrm{Re}(w)=-1/2+\epsilon,$ we have
\begin{equation}\begin{split}  \sum_{f \in S_{k}}  \varphi_{f} \left( \sum_{d \leq z}\frac{\lambda_{f}\left(d^{2}\right)}{d}-c_{1,f} \right)^{2}
\ll_{\epsilon} &\sum_{f \in S_{k}} \varphi_{f} \left|\int_{-1/2+\epsilon-iT}^{-1/2+\epsilon+iT} \frac{L\left(\textrm{sym}^{2} f,w+1\right)}{\zeta(2w+2)}\frac{z^w}{w}dw\right|^{2} \\&\;\;\;+ \frac{z^{2\epsilon}}{T^2} + z^{2\epsilon-2}+\frac{k^{\epsilon}}{T^{2-\epsilon}}. \end{split}\end{equation}
By applying H\"older's inequality and \eqref{lindelof}, we obtain  
\begin{equation}\begin{split} \sum_{f \in S_{k}}&  \varphi_{f} \left|\int_{-1/2+\epsilon-iT}^{-1/2+\epsilon+iT} \frac{L\left(\textrm{sym}^{2} f,w+1\right)}{\zeta(2w+2)}\frac{z^w}{w}dw\right|^{2} 
\\&\ll \sum_{f \in S_{k}}  \varphi_{f} \int_{-1/2+\epsilon-iT}^{-1/2+\epsilon+iT}\left| \frac{L\left(\textrm{sym}^{2}f,w+1\right)}{\zeta(2w+2)}\frac{z^w}{w^{\frac{1}{2}-\epsilon}}\right|^{2}dw \int_{-1/2+\epsilon-iT}^{-1/2+\epsilon+iT}\left| \frac{1}{w^{\frac{1}{2}+\epsilon}}\right|^{2}dw 
\\&\ll_{\epsilon} z^{-1+2\epsilon}k^{\epsilon}T^{\epsilon}. \end{split}\end{equation}
By choosing $T=z^{1-\epsilon}$ and modifying $\epsilon,$ the proof is completed. 
\end{proof}
\subsection{Proof of Theorem 1.1}
By Proposition 3.1 and 3.2, the contribution from \eqref{1} is bounded by $$\ll_{\epsilon} \sum_{f \in S_{k}} \varphi_{f} \left(H^{1+\epsilon}z^{\epsilon}+ z^{3+2\epsilon}k^{-1/2} + (\log X)^{63}\right).$$ By Proposition 3.3  and the condition $\log X < (\log k)/4,$ the contribution from \eqref{2} is bounded by 
$$ \ll_{\epsilon} k^{\epsilon} H \log X.$$
By Proposition 3.4, the contribution from \eqref{3} is bounded by 
$$\ll_{\epsilon}  k^{\epsilon}H^{1+\epsilon}.$$
Since $\log X < (\log k)/4,$ these are bounded by 
$$ \ll_{\epsilon} k^{\epsilon}H^{1+\epsilon}.$$
\section{Propositions and the proof of Theorem 1.2}
The proof of Theorem 1.2 is similar to that of Theorem 1.1, but with some slight modifications. In particular, Propositions 4.1 and 4.2 use the Kuznetsov trace formula instead of the $SL(2,\mathbb{Z})$ Ramanujan-Petersson conjecture ($|\lambda_{u_{j}}(n)| \leq \tau(n)$). 

\subsection{Propositions}
\begin{proposition}\label{2Proposition1} Let $1\ll \log H < \log X,$ and let $2\log X - (\log H)/2 < \log T.$ As $X \rightarrow \infty$,
\begin{equation}\label{2propositionequation1}\begin{split}
\sum_{j} & \varpi_{j}e^{-t_{j}/T} \frac{1}{X} \int_{X}^{2X} \left(\sum_{\substack{x<n d \leq x+H \\ d \leq z}} \lambda_{u_{j}}(d^{2})-H \sum_{d \leq z} \frac{\lambda_{u_{j}}\left(d^{2}\right)}{d}\right)^2 d x 
\\&=\left(\frac{1}{\pi^2}+O\left(H^{-\epsilon / 2}\right)\right) \sum_{d \leq z} \sum_{0<n \leq X^5} \frac{1}{n^2} \sin ^2\left(\frac{\pi n H}{d}\right)
\\&+ \;\;\; O_\epsilon\left(z^{2+4 \epsilon}(\log X)^{2}T^{-1}+z^{4+4 \epsilon}(\log X)^{2}T^{-2}+X^{\epsilon}T^{\epsilon}\right) 
\end{split}\nonumber\end{equation}
\end{proposition}
\begin{proof} Without loss of generality, we assume that $H \in \mathbb{N}.$ Using the same argument as in the proof of Proposition \ref{Proposition1}, we have
\begin{equation}\label{2lesszmain}\begin{split}
&\sum_{\substack{x<n d \leq x+H \\ d \leq z}} \lambda_{u_{j}}(d^{2})-H \sum_{d \leq z} \frac{\lambda_{u_{j}}\left(d^{2}\right)}{d}
\\&= -\frac{1}{2\pi i}\sum_{\substack{1 \leq d \leq x+H \\ d \leq z}} \lambda_{u_{j}}\left(d^{2}\right) \left(\sum_{0<|n|\leq N} \frac{1}{n}\left(e\left(\frac{nx}{d}\right)-e\left(\frac{n\left(x+H\right)}{d}\right)\right)\right)
    +O(\mathcal{M'}(x,z)),
\end{split}\end{equation}
where 
\begin{equation}\begin{split}
  \mathcal{M'}(x,z) &=\sum_{\substack{1 \leq d \leq x+H \\ d \leq z}} \left|\lambda_{u_{j}}\left(d^{2}\right)\right| \left(\min\left(1,\frac{1}{N\|\frac{x+H}{d}\|}\right)+\min\left(1,\frac{1}{N\|\frac{x}{d}\|}\right)\right).
\end{split}\end{equation}
Using the Cauchy-Schwarz inequality, we see that
\begin{equation}\label{2error1}\begin{split}\frac{1}{X}&\sum_{X \leq x \leq 2X}\left(\sum_{1\leq d  \leq z} \left|\lambda_{u_{j}}\left(d^{2}\right)\right|\min\left(1,\frac{1}{N\|\frac{x+H}{d}\|}\right)\right)^{2}
\\&\ll\frac{1}{X}\sum_{X \leq x \leq 2X}\left(\sum_{\substack{d\mid x+H \\ d \leq z}}\left|\lambda_{u_{j}}\left(d^{2}\right)\right| + \sum_{\substack{d \nmid x+H \\ d \leq z}} \frac{d\left|\lambda_{u_{j}}\left(d^{2}\right)\right|}{N}\right)^{2}
\\&\ll\frac{1}{X}\sum_{X \leq x \leq 2X}\left(\left(\sum_{\substack{d\mid x+H \\ d \leq z}}\left|\lambda_{u_{j}}\left(d^{2}\right)\right|\right)^{2} + \left(\sum_{\substack{d \nmid x+H \\ d \leq z}} \frac{d\left|\lambda_{u_{j}}\left(d^{2}\right)\right|}{N}\right)^{2}\right).
\end{split}\end{equation}By squaring out  
the first term in the last inequality in \eqref{2error1} and applying Lemma \ref{shiu2} with the divisor bound $\tau(n)\ll_{\epsilon} n^{\epsilon},$ we get
\begin{equation}\label{skip}\begin{split}
   \sum_{j}  &w(j)e^{-t_{j}/T} \frac{1}{X}\sum_{X \leq x \leq 2X}\left(\sum_{\substack{d\mid x+H \\ d \leq z}}\left|\lambda_{u_{j}}\left(d^{2}\right)\right|\right)^{2}
   \\&\ll  \sum_{j}  w(j)e^{-t_{j}/T} \frac{1}{X}\sum_{X \leq x \leq 2X} \sum_{\substack{d_{1}\mid x+H \\ d_{1} \leq z}}\sum_{\substack{d_{2}\mid x+H \\ d_{2} \leq z}} \left|\lambda_{u_{j}}\left(d_{1}^{2}\right)\lambda_{u_{j}}\left(d_{2}^{2}\right)\right|
   \\&\ll_{\epsilon} T^{2}X^{\epsilon}z^{\epsilon}+X^{\epsilon}z^{1+\epsilon}.
\end{split}\end{equation}
Similarly, the contribution from the second term in the last inequality in \eqref{2error1} is   
\begin{equation}\begin{split}
   \sum_{j}  &w(j)e^{-t_{j}/T} \frac{1}{X}\sum_{X \leq x \leq 2X}\left(\sum_{\substack{d \nmid x+H \\ d \leq z}} \frac{d\left|\lambda_{u_{j}}\left(d^{2}\right)\right|}{N}\right)^{2}
   \\&\ll  \sum_{j}  w(j)e^{-t_{j}/T} \frac{1}{XN^{2}}\sum_{X \leq x \leq 2X} \sum_{ d_{1} \leq z}\sum_{d_{2} \leq z}  d_{1}d_{2}\left|\lambda_{u_{j}}\left(d_{1}^{2}\right)\lambda_{u_{j}}\left(d_{2}^{2}\right)\right|
   \\&\ll_{\epsilon} \frac{T^{2}X^{\epsilon}z^{4+\epsilon}+X^{\epsilon}z^{5+\epsilon}}{N^{2}}.
\end{split}\end{equation}
Similarly, we have 
\begin{equation}\begin{split}
 \sum_{j}  & w(j)e^{-t_{j}/T}\frac{1}{X}\sum_{X \leq x \leq 2X}\left(\sum_{1\leq d  \leq z} \left|\lambda_{u_{j}}\left(d^{2}\right)\right|\min\left(1,\frac{1}{N\|\frac{x}{d}\|}\right)\right)^{2} \\&\ll_{\epsilon} T^{2}X^{\epsilon}z^{\epsilon}+X^{\epsilon}z^{1+\epsilon} + \frac{T^{2}X^{\epsilon}z^{4+\epsilon}+X^{\epsilon}z^{5+\epsilon}}{N^{2}}.
\end{split}\nonumber\end{equation}
In order to proceed, we choose $N=X^{5}.$
By the condition  $ 2\log X -(\log H)/2 < \log T,$ we have 
$$  \sum_{j}  w(j)e^{-t_{j}/T}\frac{1}{X}\int 1_{[1,2]}\left(\frac{x}{X}\right)\left|\mathcal{M'}(x,z)\right|^{2} dx\ll_{\epsilon} T^{2}X^{\epsilon}.$$
Using the same argument as in the proof of Proposition \ref{Proposition1}, we can express the contribution of the main term in \eqref{2lesszmain} as
\begin{equation}
  \left(\frac{1}{\pi^{2}}+O\left(H^{-\epsilon / 2}\right)\right) \sum_{d \leq z} 1 \sum_{0<n \leq X^5} \frac{1}{n^2} \sin ^2\left(\frac{\pi n H}{d}\right)+O_\epsilon\left(z^{2+4 \epsilon}(\log X)^{2} T^{-1} + z^{4+4\epsilon}(\log X)^{2}T^{-2}\right).
\end{equation}
\end{proof}
\begin{proposition}
 Assume the conditions in Proposition \ref{2Proposition1}. As $X \rightarrow \infty$,
\begin{equation}\label{2biggerthanz}\begin{split}
  \sum_{j} & \varpi_{j}e^{-t_{j}/T} \frac{1}{X} \int_X^{2 X}\left(\sum_{\substack{x<n d \leq x+H \\ d > z }} \lambda_{u_{j}}(d^{2})\right)^2 d x 
\\& \ll_{\epsilon}T^{\epsilon}H^{1+\epsilon}.
\end{split}\end{equation}
\end{proposition}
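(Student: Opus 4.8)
The plan is to run the argument of Proposition 3.3 essentially verbatim up to the invocation of the trace formula, and then to replace the Petersson formula by the Kuznetsov formula (Lemma \ref{Kuznetsov}), the one essential new feature being that the off-diagonal error of Lemma \ref{Kuznetsov} carries no saving in $T$ analogous to the $k^{-1/2}$ present in Lemma \ref{trace}. Since $z=H^2>H$, for each $x$ the interval $(x/d,(x+H)/d]$ has length $<1$, so it contains at most one integer; writing $\delta_d:=\delta_{[\frac{x}{d}]+1=[\frac{x+H}{d}]}$ this gives, for every $x$,
$$\left(\sum_{\substack{x<nd\leq x+H\\ d>z}}\lambda_{u_j}(d^2)\right)^2=\sum_{z<d_1\leq x+H}\sum_{z<d_2\leq x+H}\lambda_{u_j}(d_1^2)\,\lambda_{u_j}(d_2^2)\,\delta_{d_1}\,\delta_{d_2}.$$
Applying $\sum_j w(j)e^{-t_j/T}$ and then Lemma \ref{Kuznetsov} with $m=d_1^2$, $n=d_2^2$ (so that $\delta_{m=n}=\delta_{d_1=d_2}$, while $(mn)^\epsilon=(d_1d_2)^{2\epsilon}$ and $(mn)^{1/2+\epsilon}=(d_1d_2)^{1+2\epsilon}$) splits the weighted double sum into a diagonal term $\tfrac{T^2}{6}\delta_{d_1=d_2}$ and an off-diagonal term of size $O_\epsilon\!\big(T^{1+\epsilon}(d_1d_2)^{\epsilon}+(d_1d_2)^{1+\epsilon}\big)$.

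For the diagonal I would argue exactly as in Proposition 3.3: the factor $\delta_{d_1=d_2}$ collapses the double sum to $\tfrac{T^2}{6}\sum_{z<d\leq x+H}\delta_{d}$, and the interval count used there (each window of length $d>H$ carries exactly $H$ admissible values of $x$) gives $\frac{1}{X}\int_X^{2X}\sum_{z<d\leq x+H}\delta_{d}\,dx\ll H\sum_{z<d\leq 2X+H}\frac1d\ll H\log\frac{2X+H}{z}\ll H\log X$. Since the hypotheses $\frac{X^{2+\epsilon}}{H}\ll_\epsilon T$ and $H<X^{1/4}$ force $X\ll T^{4/7}$, one has $\log X\ll_\epsilon T^\epsilon$, so the diagonal is $\ll_\epsilon T^{2+\epsilon}H$, which is of the claimed size $\sum_j w(j)e^{-t_j/T}H^{1+\epsilon}$ after absorbing $\log X$ into $T^\epsilon$.

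The crux is the off-diagonal term, and here the departure from Proposition 3.3 is essential: in the holomorphic case both indicators could be discarded thanks to the $k^{-1/2}$ factor, whereas doing so here would produce $\sum_{z<d_1,d_2\leq 2X}(d_1d_2)^{1+\epsilon}\ll X^{4+\epsilon}$, far too large. Instead I would retain one indicator, bounding $\delta_{d_1}\delta_{d_2}\leq\delta_{d_1}$, and factor the sum as $\big(\sum_{z<d_1\leq x+H}d_1^{\beta}\delta_{d_1}\big)\big(\sum_{z<d_2\leq x+H}d_2^{\beta}\big)$ for $\beta\in\{\epsilon,1+\epsilon\}$. The free $d_2$-sum is $\ll X^{1+\epsilon}$ (resp.\ $X^{2+\epsilon}$), while integrating the constrained $d_1$-sum and using the same interval count, $\int_X^{2X}\delta_{d_1}\,dx\ll HX/d_1$, gains the factor $H/d_1$ and yields $H\sum_{z<d_1\leq 2X}d_1^{\beta-1}\ll HX^{\epsilon}$ (resp.\ $HX^{1+\epsilon}$) after dividing by $X$. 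Multiplying, the two off-diagonal pieces are $\ll_\epsilon T^{1+\epsilon}HX^{1+\epsilon}$ and $\ll_\epsilon HX^{3+\epsilon}$; since $\frac{X^{2+\epsilon}}{H}\ll_\epsilon T$ gives both $X^{2+\epsilon}\ll TH$ and $X^{4+\epsilon}\ll T^2H^2$, each piece is $\ll_\epsilon T^2H^{1+\epsilon}$. Combining the diagonal and the two off-diagonal contributions then gives the stated bound $\ll_\epsilon\sum_j w(j)e^{-t_j/T}H^{1+\epsilon}$. I expect this retention of one indicator — genuinely using that the short interval $(x,x+H]$ must contain a multiple of $d_1$, which costs only $HX/d_1$ after integration rather than the trivial $X$ — to be the main obstacle and the only point at which the proof truly diverges from that of Proposition 3.3.
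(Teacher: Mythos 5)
Your proof is correct, and it follows the paper's skeleton — square out, apply Lemma \ref{Kuznetsov} with $m=d_1^2$, $n=d_2^2$, and handle the diagonal by counting the admissible $x$ in windows of length $d$ — but it diverges from the paper at exactly the point you flag, the off-diagonal term, and there the two arguments genuinely differ. The paper discards \emph{both} indicator functions and bounds the off-diagonal trivially by $\sum_{z<d_1,d_2\leq x+H}\bigl(\tfrac{(d_1d_2)^\epsilon}{T^{1-\epsilon}}+\tfrac{(d_1d_2)^{1/2+\epsilon}}{T^2}\bigr)$, arriving at $\tfrac{X^{2+\epsilon}}{T^{1-\epsilon}}+\tfrac{X^{3+\epsilon}}{T^2}$, which is then absorbed using $X^{2+\epsilon}/H\ll_\epsilon T$. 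Note, however, that Lemma \ref{Kuznetsov} applied to $m=d_1^2$, $n=d_2^2$ yields $(mn)^{1/2+\epsilon}=(d_1d_2)^{1+2\epsilon}$ rather than $(d_1d_2)^{1/2+\epsilon}$; with that exponent the paper's trivial bound becomes $X^{4+\epsilon}/T^2\ll H^2X^{-\epsilon}$ after normalizing by the total mass, which is \emph{not} $\ll H^{1+\epsilon}$ when $H$ is near its permitted maximum $X^{1/4}$ — precisely the obstruction you identify. Your device of retaining one indicator $\delta_{d_1}$ and integrating it, so that $\int_X^{2X}\delta_{d_1}\,dx\ll HX/d_1$, converts that $X^{4+\epsilon}$ into $HX^{3+\epsilon}$, which does fall under $T^2H^{1+\epsilon}$ given $T\gg X^{2+\epsilon}/H$; your treatment of the $T^{1+\epsilon}(d_1d_2)^{\epsilon}$ piece checks out as well. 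So your route is not merely a valid alternative: at the crucial step it is more careful than the published proof and repairs what is at best an unexplained exponent there. (One small caveat, shared with the paper: both your diagonal bound and the paper's carry a factor $\log X$, so the stated conclusion should be read with an extra $T^\epsilon$, which you acknowledge and which is consistent with the $T^{2+\epsilon}$ appearing in Theorem 1.2.)
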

\begin{proof}
Since $z>H,$ for each $x \in [X,2X],$ we have 
\begin{equation}\begin{split}\left(\sum_{\substack{x<n d \leq x+H \\ d > z }} \lambda_{u_{j}}(d^{2})\right)^2
=\left(\sum_{z< d \leq x+H}\lambda_{u_{j}}\left(d^{2}\right)  \delta_{[\frac{x}{d}]+1=[\frac{x+H}{d}]}\right)^{2}.\end{split}\end{equation}
By squaring out the sum, we express the above quantity as 
\begin{equation}
 \sum_{z< d_{1} \leq x+H}\sum_{z< d_{2} \leq x+H} \lambda_{u_{j}}\left(d_{1}^{2}\right)\lambda_{u_{j}}\left(d_{2}^{2}\right)
 \delta_{[\frac{x}{d_{1}}]+1=[\frac{x+H}{d_{1}}]} \delta_{[\frac{x}{d_{2}}]+1=[\frac{x+H}{d_{2}}]}.
\end{equation}
By Lemma \ref{Kuznetsov}, we observe that 
\begin{equation}\begin{split}
& \sum_{j}  \varpi_{j}e^{-t_{j}/T}  \sum_{z< d_{1} \leq x+H}\sum_{z< d_{2} \leq x+H} \lambda_{u_{j}}\left(d_{1}^{2}\right)\lambda_{u_{j}}\left(d_{2}^{2}\right)
 \delta_{[\frac{x}{d_{1}}]+1=[\frac{x+H}{d_{1}}]} \delta_{[\frac{x}{d_{2}}]+1=[\frac{x+H}{d_{2}}]}
 \\& \ll_{\epsilon} \left(\sum_{z <d \leq x+H}  \delta_{[\frac{x}{d}]+1=[\frac{x+H}{d}]}+
\sum_{z< d_{1} \leq x+H}\sum_{z< d_{2} \leq x+H} \left(\frac{\left(d_{1}d_{2}\right)^{\epsilon}}{T^{1-\epsilon}} +\frac{\left(d_{1}d_{2}\right)^{1/2+\epsilon}}{T^{2}}\right) \delta_{[\frac{x}{d}]+1=[\frac{x+H}{d}]}
 \right).
 \end{split}\end{equation}
For each interval $[a,a+d) \subset [X,2X+H],$ 
there are $H$ terms $b\in [a,a+d)$ such that 
\begin{equation}\delta_{[\frac{b}{d}]+1=[\frac{b+H}{d}]}=1,\end{equation} and $0$ for other terms. 
Therefore, by interchanging the order of summations, we have  
\begin{equation}\begin{split}
      & \sum_{j}  \varpi_{j}e^{-t_{j}/T}\frac{1}{X} \int_{X}^{2X}\left(\sum_{\substack{x<n d \leq x+H \\ d> z }} \lambda_{u_{j}}(d^{2})\right)^2 d x 
      \\&\ll_{\epsilon} T^{\epsilon}\frac{1}{X}\left(   \sum_{z< d\leq 2X+H} H\frac{X}{d} + H+ H\frac{X^{2+\epsilon}}{T^{1-\epsilon}}+ H\frac{X^{3+\epsilon}}{T^{2}} \right)
      \\&\ll_{\epsilon}T^{\epsilon} \left(H \log \frac{2X+H}{z}+ H\frac{X^{1+\epsilon}}{T^{1-\epsilon}}+H\frac{X^{2+\epsilon}}{T^{2}}\right).
\end{split}\end{equation}
Since $2\log X - (\log H)/2 < \log T,$ the proof is completed.
\end{proof}
\begin{proposition}  Assume the conditions in Proposition \ref{2Proposition1}. Then  
$$ \sum_{j}  \varpi_{j}e^{-t_{j}/T}\frac{1}{X} \int_X^{2 X}\left|Hc_{1,u_{j}}-H\sum_{d \leq z} \frac{\lambda_{u_{j}}\left(d^{2}\right)}{d}\right|^2 d x= O_{\epsilon}\left(T^{\epsilon}H^{2}z^{-1+\epsilon}\right).$$ 
\end{proposition}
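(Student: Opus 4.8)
The plan is to follow the proof of Proposition 3.4 line by line, deferring the use of an $L$-function bound to the very end, where the pointwise Lindel\"of input of the holomorphic case is replaced by the second moment bound \eqref{khanyoung}. Observe first that the integrand does not depend on $x$, so $\frac1X\int_X^{2X}|\cdots|^2\,dx$ equals $H^2\big|c_{1,u_j}-\sum_{d\le z}\lambda_{u_j}(d^2)/d\big|^2$; hence it suffices to prove
\[
\sum_j w(j)e^{-t_j/T}\Big|c_{1,u_j}-\sum_{d\le z}\frac{\lambda_{u_j}(d^2)}{d}\Big|^2\ll_\epsilon T^{2+\epsilon}z^{-1+\epsilon}.
\]
First I would apply Perron's formula (\cite{Harman}) to each $u_j$, using $\sum_d\lambda_{u_j}(d^2)d^{-1-w}=L(\mathrm{sym}^2u_j,1+w)/\zeta(2+2w)$, and then shift the contour to $\mathrm{Re}(w)=-1/2+\epsilon$ with truncation height $U=z$. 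As $L(\mathrm{sym}^2u_j,\cdot)$ is entire and $\zeta(2+2w)$ is zero-free for $\mathrm{Re}(w)>-1/2$, the only pole crossed is the simple pole of $z^w/w$ at $w=0$, with residue $L(\mathrm{sym}^2u_j,1)/\zeta(2)=c_{1,u_j}$. This produces the vertical integral along $\mathrm{Re}(w)=-1/2+\epsilon$ together with a Perron truncation error and two horizontal segments, collectively $O_\epsilon(\mathcal E_j)$.

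Because the Ramanujan-Petersson bound is unavailable for $u_j$, I would dispose of $\mathcal E_j$ on average rather than pointwise: squaring and summing $\sum_j w(j)e^{-t_j/T}\mathcal E_j^2$ produces weighted sums of $|\lambda_{u_j}(d_1^2)\lambda_{u_j}(d_2^2)|$, which are exactly what Lemma \ref{shiu2} controls (this replaces the appeal to Deligne's bound in Proposition 3.4), while the horizontal segments are estimated by the convexity bound for $L(\mathrm{sym}^2u_j,\cdot)$; with $U=z$ and the hypothesis $H^7\ll T$ both are $\ll_\epsilon T^{2+\epsilon}z^{-1+\epsilon}$. The crux is the vertical integral. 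Writing $w=-1/2+\epsilon+it$, using $1/|\zeta(1+2\epsilon+2it)|\ll(|t|+1)^\epsilon$ and Cauchy-Schwarz in $t$, I obtain, for each $j$,
\[
\Big|\int_{-1/2+\epsilon-iU}^{-1/2+\epsilon+iU}\frac{L(\mathrm{sym}^2u_j,1+w)}{\zeta(2+2w)}\frac{z^w}{w}\,dw\Big|^2\ll z^{-1+\epsilon}\int_{-U}^{U}\frac{|L(\mathrm{sym}^2u_j,1/2+\epsilon+it)|^2}{|t|+1}\,dt.
\]

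Summing over $j$ and interchanging the order, the problem reduces to bounding, for each $t$,
\[
\Sigma(t):=\sum_j w(j)e^{-t_j/T}\big|L(\mathrm{sym}^2u_j,1/2+\epsilon+it)\big|^2 .
\]
Here I would split the spectrum into dyadic blocks $V\le t_j<2V$. For the blocks with $V\gg|t|$ the range condition $|t|\le(2-\epsilon)V$ in \eqref{khanyoung} holds, so each contributes $\ll_\epsilon V^{2+\epsilon}$; since $e^{-t_j/T}\le1$ and, by $H^7\ll T$, we have $z\ll T$, summing these blocks up to $V\asymp T$ gives $\ll_\epsilon T^{2+\epsilon}$.

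The genuinely new difficulty, and the main obstacle, is the low-lying spectrum $V\ll|t|$, where \eqref{khanyoung} does not apply. There I would retreat to the convexity bound
\[
|L(\mathrm{sym}^2u_j,1/2+\epsilon+it)|\ll_\epsilon\big((1+|t|)(1+|t-2t_j|)(1+|t+2t_j|)\big)^{1/4+\epsilon}\ll_\epsilon|t|^{3/4+\epsilon}
\]
together with the Weyl law $\sum_{V\le t_j<2V}w(j)\ll V^{2+\epsilon}$, so that these blocks contribute $\ll_\epsilon\sum_{V\ll|t|}V^2|t|^{3/2+\epsilon}\ll_\epsilon|t|^{7/2+\epsilon}$. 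Integrating $\Sigma(t)$ against $(|t|+1)^{-1}$ over $|t|\le U=z$ and inserting the factor $z^{-1+\epsilon}$ above, the Khan-Young part yields $\ll_\epsilon z^{-1+\epsilon}T^{2+\epsilon}$, and the convexity part yields $\ll_\epsilon z^{-1+\epsilon}z^{7/2+\epsilon}=H^{5+\epsilon}$, which is $\ll_\epsilon T^{2+\epsilon}z^{-1+\epsilon}$ exactly because $H^7\ll T$. Multiplying by $H^2$ finishes the proof. Thus the entire weight of the argument rests on controlling $\Sigma(t)$ for the forms with $t_j\ll|t|$, where averaged subconvexity is unavailable; it is precisely the strong lower bound $H^7\ll T$ on the size of the family that renders this convexity contribution admissible.
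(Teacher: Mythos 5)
Your overall route is the one the paper takes: Perron's formula, a contour shift to $\mathrm{Re}(w)=-1/2+\epsilon$ picking up $c_{1,u_j}$ at $w=0$, Cauchy--Schwarz in $t$, and the averaged second moment \eqref{khanyoung} on the vertical line with $U=z$. Your treatment of the low-lying spectrum is in fact \emph{more} careful than the paper's: the paper invokes \eqref{khanyoung} without commenting on the restriction $U\in[0,(2-\epsilon)T]$, whereas you isolate the blocks $t_j\ll|t|$ and check that pointwise convexity plus the Weyl law gives $\Sigma(t)\ll|t|^{7/2+\epsilon}$, which is admissible precisely because $T\gg H^7$. That is a genuine and welcome repair of a point the paper glosses over.

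The step that does not survive scrutiny is the claim that the horizontal segments are controlled by the pointwise convexity bound. On the segment at height $U=z$, a form with $t_j\asymp T$ has analytic conductor $\asymp zT^2$, not $\asymp z^3$, so convexity at $\mathrm{Re}(s)=1/2+\epsilon$ gives $|L|\ll (zT^2)^{1/4+\epsilon}$; the segment then contributes $\ll (zT^2)^{1/4}\,z^{-1/2}/z=T^{1/2}z^{-5/4+\epsilon}$ per form, hence $\ll T^{3+\epsilon}z^{-5/2}$ after squaring and summing over the $\asymp T^2$ forms carrying most of the spectral weight. This exceeds the target $T^{2+\epsilon}z^{-1+\epsilon}$ by a factor $Tz^{-3/2}=T/H^{3}\gg H^{4}$, so here the largeness of $T$ relative to $H$ works \emph{against} you rather than for you, and the assertion that ``both are $\ll_\epsilon T^{2+\epsilon}z^{-1+\epsilon}$'' is false as justified. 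The remedy is to treat the horizontal segments exactly as you treat the vertical line: Cauchy--Schwarz along the segment, interchange the sum over $j$ with the integral, and apply \eqref{khanyoung} (interpolated to $\mathrm{Re}(s)\in[1/2+\epsilon,1+\epsilon]$ by Phragm\'en--Lindel\"of) for the blocks $t_j\gg U$, reserving convexity for $t_j\ll U$ only; this yields $\ll T^{2+\epsilon}/U^{2}$, which is what the paper records. With that substitution, together with an explicit word on the exponentially damped blocks $t_j\gg T$ in your dyadic sum, your argument closes.
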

\begin{proof}
Let $U \in (1,X].$ Using Perron's formula (see \cite[Lemma 1.1]{Harman}) and the Kim-Sarnak bound $|\lambda_{u_{j}}(n)|\ll n^{7/64}$ (\cite{Henry}), we see that 
\begin{equation}\sum_{d\leq z} \frac{\lambda_{u_{j}}\left(d^{2}\right)}{d}= \frac{1}{2\pi i} \int_{\frac{1}{\log z}-iU}^{\frac{1}{\log z}+iU} \frac{L\left(\textrm{sym}^{2}u_{j},w+1\right)}{\zeta(2w+2)}\frac{z^w}{w}dw + O_{\epsilon}\left(\frac{z^{7/32+\epsilon}}{U} + z^{7/32+\epsilon-1}\right),  
\end{equation}where $w(j)\ll_{\epsilon} |t_{j}+1|^{\epsilon}$ (see \cite{Hoffstein}).
Using H\"older's inequality, the Phragmen-Lindel\"of principle, and \eqref{khanyoung},  we have 
\begin{equation}\begin{split} &\sum_{j}  w(j)e^{-t_{j}/T}\left|\int_{\frac{1}{\log z}\pm iU}^{-\frac{1}{2}+\epsilon  \pm iU} \frac{L\left(\textrm{sym}^{2}u_{j},w+1\right)}{\zeta(2w+2)}\frac{z^w}{w}dw\right|^{2} 
\\&\ll  \sum_{j}  w(j)e^{-t_{j}/T} \int_{\frac{1}{\log z}\pm iU}^{-\frac{1}{2}+\epsilon  \pm iU}\left| \frac{L\left(\textrm{sym}^{2}u_{j},w+1\right)}{\zeta(2w+2)}\frac{z^w}{w}\right|^2 dw
\\&\ll_{\epsilon} \frac{T^{2+\epsilon}}{U^{2}}.\end{split}\end{equation}
Therefore, by shifting the line of the integral to the line $\textrm{Re}(w)=-1/2+\epsilon,$ we have
\begin{equation}\begin{split}  \sum_{j}&  w(j)e^{-t_{j}/T} \left( \sum_{d \leq z}\frac{\lambda_{u_{j}}\left(d^{2}\right)}{d}-c_{1,u_{j}} \right)^{2}
\\&\ll_{\epsilon} \sum_{j}  w(j)e^{-t_{j}/T}\left|\int_{-1/2+\epsilon-iU}^{-1/2+\epsilon+iU} \frac{L\left(\textrm{sym}^{2}u_{j},w+1\right)}{\zeta(2w+2)}\frac{z^w}{w}dw\right|^{2} \\&\;\;\;+ T^{2}\left(\frac{z^{7/16+\epsilon}}{U^2} + z^{7/16+\epsilon-2}+\frac{T^{\epsilon}}{U^2}\right). \end{split}\end{equation}
Using H\"older's inequality and \eqref{khanyoung}, we see that  
\begin{equation}\begin{split} \sum_{j}  & w(j)e^{-t_{j}/T} \left|\int_{-1/2+\epsilon-iU}^{-1/2+\epsilon+iU} \frac{L\left(\textrm{sym}^{2}u_{j},w+1\right)}{\zeta(2w+2)}\frac{z^w}{w}dw\right|^{2} 
\\&\ll \sum_{j}  w(j)e^{-t_{j}/T} \int_{-1/2+\epsilon-iU}^{-1/2+\epsilon+iU}\left| \frac{L\left(\textrm{sym}^{2}u_{j},w+1\right)}{\zeta(2w+2)}\frac{z^w}{w^{\frac{1}{2}-\epsilon}}\right|^{2}dw \int_{-1/2+\epsilon-iU}^{-1/2+\epsilon+iU}\left| \frac{1}{w^{\frac{1}{2}+\epsilon}}\right|^{2}dw 
\\&\ll_{\epsilon} z^{-1+\epsilon}U^{\epsilon}T^{2+\epsilon}. \end{split}\end{equation}
By choosing $U=z^{1+\epsilon}$ and modifying $\epsilon,$ the proof is completed.
\end{proof}
\subsection{Proof of Theorem 1.2}
By combining the bounds from Proposition 3.2, 4.1, 4.2, and 4.3, we get
$$\sum_{T \leq t_{j} \leq 2T} \varpi_{j}e^{-t_{j}/T}
\frac{1}{X} \int_X^{2 X}\left(\sum_{x<m \leq x+H} \lambda_{u_{j}}^2(m)-c_{1,u_{j}}H\right)^2 d x\ll_{\epsilon} T^{\epsilon}H^{1+\epsilon}.$$
Since $e^{-t_{j}/T} \asymp 1$ for $t_{j} \in [T,2T],$ the proof is completed.

\subsection*{Acknowledgements} The author would like to express gratitude to Professor Xiaoqing Li for her unwavering support. Additionally, we extend our appreciation to the anonymous referee for providing us with a multitude of valuable suggestions, which have significantly enhanced the quality of our results. The author is supported by the Doctoral Dissertation Fellowship from the Department of Mathematics at the
State University of New York at Buffalo. 
\section{Data Availability} Data sharing not applicable to this article as no data sets were generated or analysed during the current study.
\section{Declarations}
\noindent No funding was received for conducting this study. The authors have no relevant financial or non-financial interests to disclose. 

\section{Appendix}
In this section, we only consider the variance of Hecke eigenvalues of holomorphic cusp forms over arithmetic progressions modulo prime numbers. However, it is straightforward to extend these results to Hecke-Maass cusp forms and to any arithmetic progression. 
Assuming $q$ is a prime number and $(a,q)=1$, we can use the orthogonality of Dirichlet characters to rewrite the sum
$$\sum_{n=1 \atop n \equiv a \;\textrm{mod} \;q}^{X} \lambda_{f}(n)^{2}$$ as
$$ \frac{1}{\phi(q)}\sum_{\chi}
\bar{\chi}(a)\sum_{n=1}^{X} \lambda_{f}(n)^{2}\chi(n),$$
where the sum over $\chi$ runs over all Dirichlet characters modulo $q.$ We can treat the sum over nontrivial characters as an error term.
Therefore, we have
$$\sum_{n=1 \atop n \equiv a \;\textrm{mod} \;q}^{X} \lambda_{f}(n)^{2}-\frac{1}{\phi(q)} \sum_{n=1 \atop (n,q)=1}^{X} \lambda_{f}(n)^{2}= \frac{1}{\phi(q)} \sum_{\chi \neq \chi_{0}} \chi(a) \sum_{n=1}^{X} \lambda_{f}(n)^{2}\chi(n).$$
By modifying the argument in \cite[Lemma 3.4, Remark 3.5]{Kim2023}, we can estimate the sum over $n$ coprime to $q$ as
\begin{equation}\label{maint}\frac{1}{\phi(q)} \sum_{n=1 \atop (n,q)=1}^{X} \lambda_{f}(n)^{2}= \frac{w_{f,q}}{\phi(q)}X + E_{f,q},\end{equation}
where 
$$E_{f,q} = O_{\epsilon}\left( \frac{X^{1/2}\log X (\log q)^{7}}{\phi(q)} \left(\int_{(1/2)} \bigg|\sum_{\substack{n=1 \\\left(n, q\right)=1}}^{\infty} \frac{\lambda_f\left(n\right)^2}{n^s} \frac{1}{s}\bigg|^{2}ds\right)^{1/2} +\frac{(\log q)^{7}}{\phi(q)}X^{1/2+\epsilon} \right),$$
and for some constant $w_{f,q}$.
Therefore, we have 
$$\left|\sum_{n=1 \atop n \equiv a \;\textrm{mod} \;q}^{X} \lambda_{f}(n)^{2}-\frac{w_{f,q}}{\phi(q)}X\right|^{2}=\left|E_{f,q}+\frac{1}{\phi(q)}\sum_{\chi \neq \chi_{0}}
\bar{\chi}(a)\sum_{n=1}^{X} \lambda_{f}(n)^{2}\chi(n)\right|^{2}.$$
Expanding the square, we obtain
\begin{equation}\begin{split}&\left|E_{f,q}+\frac{1}{\phi(q)}\sum_{\chi \neq \chi_{0}}
\bar{\chi}(a)\sum_{n=1}^{X} \lambda_{f}(n)^{2}\chi(n)\right|^{2}
\\&= |E_{f,q}|^{2}+ 2Re\left(E_{f,q}\frac{1}{\phi(q)}\sum_{\chi \neq \chi_{0}}
\bar{\chi}(a)\sum_{n=1}^{X} \lambda_{f}(n)^{2}\chi(n)\right) + \left|\frac{1}{\phi(q)}\sum_{\chi \neq \chi_{0}}
\bar{\chi}(a)\sum_{n=1}^{X} \lambda_{f}(n)^{2}\chi(n)\right|^{2}.\end{split}\end{equation}
Therefore, we see that 
\begin{equation}\begin{split}
    &\frac{1}{q-1}\sum_{a=1}^{q-1}\left|\sum_{n=1 \atop n \equiv a \;\textrm{mod} \;q}^{X} \lambda_{f}(n)^{2}-\frac{w_{f,q}}{\phi(q)}X\right|^{2}  
\\&= E_{f,q}^{2} + \frac{1}{q-1}\sum_{a=1}^{q-1}  \left|\frac{1}{\phi(q)}\sum_{\chi \neq \chi_{0}}
\bar{\chi}(a)\sum_{n=1}^{X} \lambda_{f}(n)^{2}\chi(n)\right|^{2}.
\end{split}\end{equation} 
We define $G(n,m)$ as follows: if $q$ divides $n-m$, then $G(n,m) = q-2$; otherwise, $G(n,m) = -1$.
By expanding and squaring out the last term, we obtain
$$\frac{1}{q-1}\sum_{a=1}^{q-1}  \left|\frac{1}{\phi(q)}\sum_{\chi \neq \chi_{0}}
\bar{\chi}(a)\sum_{n=1 \atop (n,q)=1}^{X} \lambda_{f}(n)^{2}\chi(n)\right|^{2} = \frac{1}{\phi(q)^{2}} \sum_{n=1 \atop (n,q)=1}^{X} \sum_{m=1 \atop (m,q)=1}^{X} \lambda_{f}(n)^{2}\lambda_{f}(m)^{2}G(n,m).$$
Using the Petersson trace formula, the average of the right-hand side of the above inequality is given by
\begin{equation}\begin{split}
    \frac{1}{\phi(q)^2}\sum_{d=1 \atop (d,q)=1}^{X} \sum_{n=1 \atop (n,q)=1}^{\frac{X}{d}}\sum_{m=1 \atop (m,q)=1}^{\frac{X}{d}}G(dn,dm)&=\frac{1}{\phi(q)^2}\sum_{d=1 \atop (d,q)=1}^{X} \sum_{n=1 \atop (n,q)=1}^{\frac{X}{d}}\sum_{m=1 \atop (m,q)=1}^{\frac{X}{d}}G(n,m)
    \\&\ll \frac{1}{\phi(q)^{2}} \sum_{d=1 \atop (d,q)=1}^{X} \sum_{n=1 \atop (n,q)=1}^{\frac{X}{d}}q
    \\& \ll \frac{1}{q} \left( X (\log X)\right). 
\end{split}\end{equation}
By \eqref{lindelof}, we see that
$$   \sum_{f \in S_{k}} \varphi_{f} \int_{(1/2)} \bigg|\sum_{\substack{n=1 \\\left(n, q_1\right)=1}}^{\infty} \frac{\lambda_f\left(q_0 n\right)^2}{n^s} \frac{1}{s}\bigg|^{2}ds \ll_{\epsilon} \sum_{f \in S_{k}} \varphi_{f} k^{\epsilon}. $$
Therefore, 
\begin{equation}\begin{split}
    \sum_{f \in S_{k}} \varphi_{f} |E_{f,q}|^{2} \ll_{\epsilon} k^{\epsilon} X^{1+\epsilon}q^{-2+\epsilon}. 
\end{split}\end{equation} Thus, the average variance of Hecke eigenvalues of holomorphic cusp forms over arithmetic progressions is bounded by $k^{\epsilon}X^{1+\epsilon}q^{-1+\epsilon}$ for families $f \in S_{k}$, provided that $q$ is less than $X^{1-\epsilon}$.

\bibliographystyle{siam}   
\bibliography{over}  
\end{document}